\newtheorem{theorem}{Theorem}[section]
\newtheorem{corollary}[theorem]{Corollary}
\newtheorem{lemma}[theorem]{Lemma}
\newtheorem{proposition}[theorem]{Proposition}
\theoremstyle{definition}
\newtheorem{definition}[theorem]{Definition}
\newtheorem{assumption}[theorem]{Assumption}
\newtheorem{agreement}[theorem]{Agreement}
\newtheorem{descrip}[theorem]{Description}
\theoremstyle{remark}
\newtheorem{remark}[theorem]{Remark}
\newtheorem{example}[theorem]{Example}
\newcommand{\PP}{{\mathbb P}}
\newcommand{\EE}{{\mathbb E}}
\newcommand{\R}{\mathbb{R}}
\newcommand{\N}{\mathbb{N}}
\newcommand{\argmax}{\operatornamewithlimits{argmax}}
\newcommand{\E}{\mathbb{E}}
\newcommand{\p}{\mathbb{P}}
\newcommand{\I}{\mathcal{I}}
\newcommand{\Var}{\text{Var}}
\begin{document}
\title{Optimal variance stopping with linear diffusions}
\author{Kamille Sofie T{\aa}gholt Gad}
\address{University of Copenhagen\\
Universitetsparken 5\\
2100 Copenhagen\\
Denmark}
\email{kamille@math.ku.dk}

\author{Pekka Matom{\"a}ki}
\address{Turku School of Economics\\
Department of Accounting and Finance\\
20014 University of Turku\\
Finland}
\email{pmatomaki@gmail.com}

\keywords{Optimal stopping, Variance, Non-linear optimal stopping, Linear diffusion, Infinite zero-sum game}
\subjclass[2010]{60G40; 60J60; 90C30; 91A05; 91A35}

Published in Stochastic Processes and their Applications, Vol 130(4), 2349 -- 2383 (2020). 

\begin{abstract}
We study the optimal stopping problem of maximizing the variance of an unkilled linear diffusion. Especially, we demonstrate how the problem can be solved as a convex two-player zero-sum game, and reveal quite surprising application of game theory by doing so. Our main result shows that an optimal solution can, in general case, be found among stopping times that are mixtures of two hitting times. This and other revealed phenomena together with suggested solution methods could be helpful when facing more complex non-linear optimal stopping problems. The results are illustrated by a few examples.
\end{abstract}
\maketitle
\section{Introduction}

In classical optimal stopping problems one seeks a stopping time that optimizes the expectation of some process upon stopping, and optimal stopping with respect to higher moments has only recently been approached (see \cite{Gad15,GadPed15,Pedersen11,PedPes12, Christensen18}). 
In this paper we study the optimal stopping problem of finding a stopping time that maximizes the variance of a general unkilled linear diffusion $X_t$, i.e. we study the \emph{variance problem} 
\begin{align}\label{eq prob}
\sup_{\tau\in \mathcal{T}}\Var_x\left\{X_\tau\right\}=\sup_{\tau\in \mathcal{T}}\left\{\E_x\left\{X_\tau^2\right\}-\E_x\left\{X_\tau\right\}^2\right\},
\end{align}
where $\mathcal{T}$ is a class of randomized stopping times generated from the filtration of $X$. Loosely, a randomized stopping times is a stopping time where one chooses a stopping time using a known probability function. The main difficulty in \eqref{eq prob} is the fact that due to the non-linear term on the right hand side, the highly developed machinery for solving classical optimal stopping problems (e.g. \cite{BeiLer00,Pedersen05,Salminen85,PesShi06}) is not readily usable. 

The field of non-linear optimal stopping is new and thus present work deals with basic questions. The main importance on the results is thus the structure of solutions and the identification of tools for solution methods, rather than the specific solutions. However, the variance stopping problem is also interesting in its own right: Variance may be seen as a measure of risk and by maximizing the variance we get a tight upper bound for this risk.

As our main results, we have two observations to offer. The first observation is that for a general non-killed continuous diffusion the pure threshold rule cannot provide the value for all cases; There are cases where a randomized mixture of two exit times is an optimal stopping time and simple threshold rules offer purely weaker values. A somewhat similar result has been shown for some processes with jumps (see \cite{GadPed15}). However, the importance of randomized stopping times for the jump processes studied in \cite{GadPed15} can be narrowed down to mixtures of exit times with a single boundary. For general diffusions, we need to randomized between one and two boundary exit times to reach the solution. This reliance on randomized stopping times is a remarkable difference from classical optimal stopping problems, which usually always have an exit time solution whenever a solution exists. 
The second observation we make is that the variance stopping problem is very closely related to game theory, and thus well-known results from game theory are readily usable. To see the link, we notice that for a random variable $X_t$, we have $\sup_{\tau}\Var_x(X_\tau)=\sup_{\tau}\inf_c\E_x\left\{(X_\tau-c)^2\right\}$. By first narrowing the class of stopping times within which the optimal stopping time is found, one can utilize the theory of continuous convex two-player zero-sum games (see e.g. \cite{Bohnenblust50,Karlin59, Vor77}) to solve the problem. 

Although usual optimal stopping problems do not give rise to randomized solution, in game theory context one can nevertheless find randomized optimal stopping times more easily. For example, in \cite{HHOZ15} it was shown how in a non-linear gambling problem an optimal stopping strategy of a pre-committing gambler can be a randomized stopping time. Further, in a Dynkin game setting --- a two-player stopping game --- one can find in some cases an optimal randomized stopping time (see \cite{Touzi02}). However, it should be noticed that in variance problem the randomized stopping time concept arises because the usual set of stopping rules is simply too limited to supply an optimal strategy in as complex case as a non-linear stopping set is. Very recently, the idea to formulate non-linear optimal stopping problems into game framework has been also considered  deeply in \cite{Christensen17,Christensen18}, and this aspect is discussed more deeply in Subsection \ref{subsec on optimality}.

We also have some minor observations to offer. One observation is that similar to previously studied non-linear optimal stopping problems (cf. \cite{GadPed15,Pedersen11,PedPes12}) the variance problem for general non-killed continuous diffusions has a strong dependence on the initial value of the underlying process. This translates into static optimality, which will be discussed more deeply in Subsection \ref{subsec on optimality}. Another observation is that we encounter quite a strong transiency requirement in order to attain a non-trivial solution: if $X_t$ is recurrent (i.e. hits every point with probability 1) or it is "not transient enough" then the problem is trivial. Lastly, we will observe that if the scale function satisfies a simple, typically satisfied monotonicity requirement, then randomization is not needed, and solution turns out to be quite effortless to find. All in all, our results indicate that although the variance problem is only a small step away from the classical linear optimal stopping problems, there are some quite substantial dissimilarities in the structure of the solutions as well as solution methods.

The study on variance stopping began recently when Pedersen in \cite{Pedersen11} proposed a verification theorem for the variance stopping problem \eqref{eq prob} and used it for some explicit continuous It\^o-diffusions. The verification theorem states that in order to reach the solution, it is sufficient to solve an embedded classical optimal stopping problem with certain side conditions. The verification theorem has also been used successfully for solving the variance problem for geometric L\'evy processes in \cite{GadPed15}. Another non-linear optimal stopping problem which has been solved explicitly is the \textit{Mean-variance problem} given by
\begin{align*}
\sup_\tau\left(\E_x\left\{X_\tau\right\}-c\Var_x\left\{X_\tau\right\}\right).
\end{align*}
This problem has been resolved in \cite{PedPes12} when the underlying process $X$ is a geometric Brownian motion applying a Lagrange multipliers method. 
Similar approach has been utilized in the mean-variance setting in \cite{Gad15b} for certain geometric L\'evy processes. Recently, more comprehensive view over general non-linear optimal stopping problem has been investigated in \cite{Christensen17,Huang17, Huang18, Christensen18}. These will be discussed in more detail in Subsection \ref{subsec on optimality}, as well as the notions on static and dynamic optimality.

The structure of this paper is the following. In Section \ref{sec formula} we lay down our assumptions and divide the problem into different categories depending on whether the solution is trivial or not. In Section \ref{sec results} we give our main results for non-trivial cases and discuss the meaning of the results. The proof of these results are laid down in Sections \ref{sec proof} and \ref{sec cases II & III} by leaning on knowledge on continuous convex zero-sum games. After the general proof, in Subsection \ref{subsec on optimality}, we discuss at length on optimality and structure of the solution. Trivial and marginal cases are discussed in Section \ref{sec special case} as well as killed diffusion case. In Section \ref{sec examples} we provide a step-by-step solution algorithm and illustrate our results with three examples. 

\section{Setting and problem formulation}\label{sec formula}

The mathematical formulation of our setting and problem are next considered.
\begin{agreement}\label{def X}
Let $X_t$ be a regular linear diffusion defined on a filtered probability space $(\Omega,\mathcal{F},\mathbb{F},\mathbb{P})$, where $\mathbb{F}$ is the filtration generated from $X$. Let $X_t$ evolve on $\I:=(\alpha,\beta)\subseteq\R$. The boundaries can be natural, exit, entrance or killing and in the case of killing boundaries, the end points belong to $\I$. We also allow absorbing boundaries by assuming that on the interior of $\I$ the diffusion is otherwise regular (i.e. starting from interior of $\I$, there is a positive probability that the process hits an arbitrary interior state). We assume that the diffusion does not die inside the state space and that the scale function $S(x)$ and speed measure $m(x)$ are continuous. 
\end{agreement}

The minimal requirements for a scale function are that it is increasing and continuous. Furthermore, we assume, without losing the generality, that either $S(\alpha)=0$ or $S(\alpha)=-\infty$. If $S(\alpha)\in(-\infty,\infty)$, we can define $\tilde{S}(x)=S(x)-S(\alpha)$ to be a new scale function fulfilling our assumption. Here, and later, we understand $S(\alpha)=\lim_{a\searrow\alpha}S(a)$.

Now, in the variance problem we seek to identify the value function, $V$, and an optimal stopping time, $\tau^*$, such that  
\begin{align}\label{eq probb}
V(x):=\sup_{\tau\in\mathcal{T}}\Var_x\left\{X_\tau\right\}=\Var_x\left\{X_{\tau^*}\right\}, 
\end{align}
where the subscript $x$ refers to the initial state of the process $X$, and $\mathcal{T}$ is the class of randomized stopping times, defined below, generated from $X$. 

We include stopping times which may take the value infinity, and thus it is common to rather refer to $\mathcal{T}$ as a set of Markov times (e.g. \cite{PesShi06}).

Since we allow stopping times to take the value infinity we need to specify how we interpret $X_\infty$. Let $\zeta:=\inf\{t\geq0\mid X_t =\alpha\text{ or }\beta\}$ be the life time of the process $X_t$ on the interior of $\I$. We interpret $X_\infty:=\limsup_{t\rightarrow \zeta}X_t$.

\begin{definition}\label{def random stopping time}
Define the class of \emph{randomized stopping times} in the following way. Assume that there exists a random variable $U$ uniformly distributed on $[0,1]$ and independent of the process $X$. This may require to expand the probability space. Define an augmented filtration $\hat{\mathbb{F}}$ as the filtration generated from both $U$ and the process $X$, that is $\hat{\mathbb{F}}_t=\sigma\{U,(X_s)_{0\leq s\leq t}\}$. Now the class of randomized stopping times $\mathcal{T}$ is defined as all stopping times generated from $\hat{\mathbb{F}}$. 
\end{definition}

This definition of randomized stopping times is quite general, and in fact more general than needed. We will see that whenever an optimal stopping time exists, we may find an optimal stopping time within the subclass denoted by \emph{Bernoulli randomized stopping times}. 
\begin{definition}\label{def Bernoulli}
The \emph{Bernoulli randomized stopping times} are the stopping times $\tau$ which may be written in the form: $\tau= \mathbbm{1}_{(U <p)}\tau_1 + \mathbbm{1}_{(U\geq p)}\tau_2$, where $p\in [0,1]$ and where $\tau_1, \tau_2$ are stopping times with respect to the filtration $\mathbb{F}$ and $U$ uniformly distributed on $[0,1]$ and independent of the process $X$. Equivalently, we may also write $\tau=\xi_p\tau_1+(1-\xi_{p})\tau_2$, where $\xi_p$ is a Bernoulli random variable with a parameter $p\in[0,1]$, i.e. $\p\left(\xi_p=1\right)=1-\p\left(\xi_p=0\right)=p$. We say that $\tau$ is a mixture of $\tau_1$ and $\tau_2$.
\end{definition}

The solutions we find for the variance problem are hitting times or mixtures of hitting times. For hitting times we use the following notation. 
We denote by $\tau_z:=\inf\{t\geq0\mid X_t=z\}$ the first hitting time to a state $z$ and by $\tau_{(z,y)}:=\inf\{t\geq0 \mid X_t\notin (z,y)\}$ the first exit time from an open interval $(z,y)$. Lastly, $\tau_{(\alpha,z)}=\lim_{a\to\alpha}\tau_{(a,z)}$ and similarly $\tau_{(z,\beta)}=\lim_{b\to\beta}\tau_{(z,b)}$.

\subsection{Scale function and transiency of a diffusion}

The next definition is essential when determining finiteness of the value and the shape of the solution.

\begin{definition}
\begin{enumerate}[(A)]
	\item A boundary point $\alpha$ ($\beta$) is \emph{attractive}, if $\lim_{t\to\infty}X_t=\alpha$ $(\beta)$ with positive probability. 
	\item A diffusion is said to be \emph{recurrent}, if $\p_x\left(\tau_y<\infty\right)=1$ for all $x,y\in \I$, and a diffusion which is not recurrent is said to be \emph{transient}. 
\end{enumerate}
\end{definition}

It is known (e.g. II.6 in \cite{BorSal02}) that the finiteness of a scale function at a boundary means that the corresponding end point is attractive, i.e. if $S(\alpha)=0$, then $\lim_{t\to\infty}X_t=\alpha$ with positive probability. This is closely related to the transiency of the diffusion, as seen in the next proposition.
\begin{proposition}[see Proposition 2.2 in \cite{Salminen85}]\label{prop trans1}
Let $X_t$ be as in Agreement \ref{def X}. Then $X_t$ is transient if and only if $S(\alpha)>-\infty$ and/or $S(\beta)<\infty$, i.e. at least one of the end points is attractive.
\end{proposition}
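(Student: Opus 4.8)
The plan is to reduce everything to the classical two-sided hitting probability formula for a diffusion in natural scale and then pass to the limit at the boundaries.

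First I would record the fundamental identity. For interior points $a < x < b$ in $\I$, the process $S(X_t)$ stopped at the exit time $\tau_{(a,b)}$ is a bounded martingale, so optional stopping gives
$$\p_x\left(\tau_b < \tau_a\right) = \frac{S(x) - S(a)}{S(b) - S(a)}, \qquad \p_x\left(\tau_a < \tau_b\right) = \frac{S(b) - S(x)}{S(b) - S(a)}.$$
This is where the killing boundary condition matters: it guarantees that $\tau_{(a,b)}$ is genuinely the first exit time and that $S(X)$ does not re-enter, so the formula is the clean one. Regularity ensures that the hitting times of interior targets are the relevant finite quantities on the event that the boundary is not reached.

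For the ``if'' direction, I would assume at least one endpoint is attractive, say $S(\alpha) > -\infty$, which after the normalization reads $S(\alpha) = 0$. Fix $x < y$ in $\I$. Letting $a \searrow \alpha$ and using that $\{\tau_y < \tau_a\}$ increases to the event that $X$ reaches $y$ before being absorbed at (or converging to) $\alpha$, I obtain
$$\p_x\left(\tau_y < \tau_\alpha\right) = \lim_{a \searrow \alpha}\frac{S(x) - S(a)}{S(y) - S(a)} = \frac{S(x)}{S(y)} < 1,$$
since $S$ is strictly increasing. Hence with positive probability the path converges to $\alpha$ without ever hitting $y$, so $\p_x(\tau_y < \infty) < 1$ and $X$ is transient. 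If instead it is the right endpoint that is attractive, I would run the symmetric argument with $y < x$ and $b \nearrow \beta$.

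For the ``only if'' direction I would prove the contrapositive: assume both endpoints are non-attractive, i.e. $S(\alpha) = -\infty$ and $S(\beta) = +\infty$, and show recurrence. Given $x$ and a target $y > x$, I pick $a < x$ and let $a \searrow \alpha$; because $S(a) \to -\infty$,
$$\p_x\left(\tau_y < \infty\right) \ge \lim_{a \searrow \alpha}\p_x\left(\tau_y < \tau_a\right) = \lim_{a \searrow \alpha}\frac{S(x) - S(a)}{S(y) - S(a)} = 1.$$
The symmetric computation with $S(\beta) = +\infty$ handles $y < x$, so $\p_x(\tau_y < \infty) = 1$ for all $x,y$, which is exactly the negation of transiency. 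I expect the main obstacle to be the careful justification of the limiting step rather than the algebra: one must argue that the monotone family $\{\tau_y < \tau_a\}$ converges as $a \searrow \alpha$ to the event ``hit $y$ before drifting to $\alpha$'', and that on its complement the path genuinely avoids $y$ forever — which is precisely where the attractiveness characterization $\lim_{t\to\infty} X_t = \alpha$ (guaranteed by $S(\alpha) > -\infty$) together with the killing boundary condition are used. Matching the two boundary cases and the two orderings of $y$ relative to $x$ is then routine.
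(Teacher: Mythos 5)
The paper does not actually prove this proposition: it is imported verbatim as Proposition 2.2 of \cite{Salminen85}, so there is no internal proof to compare against, and your attempt must be judged on its own merits. Structurally it is the standard scale-function argument and it is correct: the two-sided exit identity, the monotone limits $a\searrow\alpha$ and $b\nearrow\beta$, and proving the converse by contraposition (both boundaries non-attractive $\Rightarrow$ $\p_x(\tau_y<\infty)=1$ for all $x,y$) are exactly the right ingredients, and the ``only if'' direction as you wrote it goes through without further trouble.

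The one step you should tighten is the limiting argument in the ``if'' direction, and you should tighten it differently from how you propose. Appealing to the characterization ``$S(\alpha)>-\infty$ guarantees $\lim_{t\to\infty}X_t=\alpha$ with positive probability'' is dangerously circular: that attractiveness statement is essentially the content being proved here, so it cannot be an input. Fortunately the appeal is unnecessary. Take $a_n\searrow\alpha$; the events $\{\tau_y<\tau_{a_n}\}$ increase, and on the complement $\bigcap_n\{\tau_{a_n}<\tau_y\}$ the times $\tau_{a_n}$ increase to some $T\le\infty$. If $T=\infty$, then $\tau_y\ge\tau_{a_n}$ for all $n$ forces $\tau_y=\infty$; if $T<\infty$, path continuity gives $X_T=\alpha$, and the killing convention at a regular boundary --- this is the only place it is genuinely used --- prevents the path from returning to hit $y$. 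Either way $\tau_y=\infty$ on that event, while $\{\tau_y<\tau_{a_n}\}\subset\{\tau_y<\infty\}$ because exit times from compact subintervals are a.s.\ finite for a regular diffusion; hence $\p_x\left(\tau_y<\infty\right)=\lim_n\frac{S(x)-S(a_n)}{S(y)-S(a_n)}=\frac{S(x)}{S(y)}<1$, which yields transiency. With that repair your proof is complete. For comparison, the paper's own Lemma \ref{lemma hit} reaches the same limit formulas \eqref{eq hit time y}--\eqref{eq hit time z} not by monotone events but via the Green-function identity $\p_x\left(\tau_y<\infty\right)=G_0(x,y)/G_0(y,y)$ from \cite{BorSal02}; your elementary route avoids that machinery at the cost of the boundary bookkeeping spelled out above.
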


Another important feature of the scale function is its relation to the hitting time distribution of a diffusion (see e.g. II.4 in \cite{BorSal02}): for $\alpha<a<x<b<\beta$ we have
\begin{align}\label{eq hit}
\begin{aligned}
\p_x\left(\tau_a<\tau_b\right)&=\frac{S(b)-S(x)}{S(b)-S(a)},\quad\text{and }\\
\p_x\left(\tau_b<\tau_a\right)&=\frac{S(x)-S(a)}{S(b)-S(a)}.
\end{aligned}
\end{align}

Strictly speaking, \eqref{eq hit} tells us the distributions under the condition that we hit either $a$ or $b$ for a given $a<x<b$, but not whether a diffusion eventually exits from an interval $(a,b)$ almost surely. That it actually exits almost surely from an arbitrary interval $(a,b)$ with compact closure in $\I$ has been proved e.g. in \cite[Theorem 6.11]{Klebaner99} for It\^o diffusions. For completeness, we include here a proof for a general diffusion.

\begin{lemma}\label{lemma hit}
Let $X_t$ be as in Agreement \ref{def X} and let $x\in(z,y)$, where $\alpha<z<y<\beta$. Then $\p_x\left(\min\{\tau_z,\tau_y\}<\infty\right)=1$, i.e. $X_t$ exits from an open interval with a compact closure in finite time with probability $1$.
\end{lemma}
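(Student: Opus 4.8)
The plan is to prove the stronger statement that the mean exit time is finite, $\E_x[\sigma]<\infty$ with $\sigma:=\min\{\tau_z,\tau_y\}$, since this immediately forces $\p_x(\sigma<\infty)=1$. The starting observation is that on the compact interval $[z,y]\subset\I$ we have all the ingredients we need: $S$ is continuous and strictly increasing, so $-\infty<S(z)<S(y)<\infty$, and the speed measure $m$, being continuous, assigns finite mass $m((z,y))<\infty$ to the interval.

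First I would introduce the Green function of $(z,y)$ with respect to the scale $S$,
\[
G(w,\xi):=\frac{\bigl(S(w\wedge\xi)-S(z)\bigr)\bigl(S(y)-S(w\vee\xi)\bigr)}{S(y)-S(z)},\qquad w,\xi\in(z,y),
\]
and set $g(w):=\int_{(z,y)}G(w,\xi)\,m(d\xi)$. Since $0\le G(w,\xi)\le S(y)-S(z)$, the function $g$ is well defined, nonnegative and bounded, with $0\le g(w)\le (S(y)-S(z))\,m((z,y))<\infty$, and it satisfies the boundary conditions $g(z)=g(y)=0$. By construction $g$ is the solution of $\frac{d}{dm}\frac{d}{dS}\,g=-1$ on $(z,y)$; that is, $\mathcal{A}g\equiv-1$, where $\mathcal{A}$ denotes the generator of the diffusion (see e.g. \cite{BorSal02}).

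The final step is a Dynkin-type argument. For a regular diffusion the process $g(X_{t\wedge\sigma})+(t\wedge\sigma)$ is a local martingale, because applying the generator to $g$ contributes the drift $\mathcal{A}g=-1$, which exactly cancels the clock term. On $[z,y]$ the bounded function $g$ and the bounded clock $t\wedge\sigma$ make $s\mapsto g(X_{s\wedge\sigma})+(s\wedge\sigma)$ bounded on each $[0,t]$, so it is a genuine martingale there; evaluating at $s=t$ and using $g\ge0$ gives $\E_x[t\wedge\sigma]=g(x)-\E_x[g(X_{t\wedge\sigma})]\le g(x)$. Letting $t\to\infty$ and invoking monotone convergence yields $\E_x[\sigma]\le g(x)<\infty$, which proves the claim. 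I expect the main obstacle to be the justification of the Dynkin step for a \emph{general} (not necessarily It\^o) diffusion, namely verifying that $g$ lies in the domain of the generator and that $g(X_{t\wedge\sigma})+(t\wedge\sigma)$ is indeed a local martingale; this is classical for scale-and-speed diffusions but should be stated carefully, as it is exactly the point where continuity of $S$ and $m$ is used. As an alternative, more hands-on route, one can pass to natural scale, note that $S(X_{t\wedge\sigma})$ is a bounded, hence convergent, martingale, and rule out the event $\{\sigma=\infty\}$ by showing that regularity forbids $X_t$ from converging to a point of the open interval; the finiteness of $m$ on compacts is what prevents the diffusion from ``stalling'' before exiting.
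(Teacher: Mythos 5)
Your proof is correct, but it takes a genuinely different route from the paper. The paper first disposes of the recurrent case, then for a transient diffusion computes $\p_x\left(\tau_y<\infty\right)$ and $\p_x\left(\tau_z<\infty\right)$ from the ratio $G_0(x,y)/G_0(y,y)$ of the (zero-resolvent) Green function, reduces everything to scale-function expressions such as $\lim_{a\to\alpha}\frac{S(x)-S(a)}{S(y)-S(a)}$, and finishes by a case analysis on whether $S(\alpha)$ and $S(\beta)$ are finite (via Proposition \ref{prop trans1}); it never touches the speed measure. You instead prove the stronger, purely local statement $\E_x\left\{\sigma\right\}\le g(x)<\infty$ with $\sigma=\min\{\tau_z,\tau_y\}$, using the Green kernel of the bounded interval and the finiteness of $m$ on the compact $[z,y]$. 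Your computation is sound: $G$ is bounded by $S(y)-S(z)$, the identity $\frac{d}{dm}\frac{d}{dS}g=-1$ is a direct calculation, and the Dynkin step gives $\E_x\left\{t\wedge\sigma\right\}\le g(x)$, whence the claim by monotone convergence. Your approach buys a quantitative bound on the exit time and needs no global information about the boundaries $\alpha,\beta$ or the transience dichotomy, whereas the paper's argument stays entirely within the hitting-probability formulas \eqref{eq hit}--\eqref{eq hit time z} already quoted and avoids any generator-domain technicalities. Two caveats, both minor: (i) the Dynkin/local-martingale step you flag is indeed the delicate point, but rather than verifying membership in the generator's domain you may simply cite the classical identity $\E_x\left\{\sigma\right\}=\int_{(z,y)}G(x,\xi)\,m(d\xi)$ for regular diffusions (II.2 in \cite{BorSal02}); (ii) be aware that in constructions where the speed measure is \emph{defined} through expected exit times (It\^o--McKean) your argument would be circular, but since this paper takes $(S,m)$ as given data with $m$ continuous, hence finite on compacts, the reasoning is legitimate. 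Your alternative sketch at the end (bounded martingale $S(X_{t\wedge\sigma})$ plus ruling out convergence to an interior point) is vaguer: convergence of $S(X_{t\wedge\sigma})$ alone does not exclude $\sigma=\infty$ without a separate argument that a regular interior point cannot be a trap, so the Green-function route is the one to keep.
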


\begin{proof}
If $X_t$ is recurrent, the claim is clear. Therefore, assume that $X_t$ is transient. By II.20 in \cite{BorSal02} we can say that
\begin{align}\label{eq G0}
\p_x\left(\tau_y<\infty\right)=\frac{G_0(x,y)}{G_0(y,y)},
\end{align}
where $G_0$ is the Green function associated to a diffusion $X_t$ (e.g. II.11 in \cite{BorSal02}). Furthermore, by II.11 in \cite{BorSal02}, we have
\begin{align*}
G_0(x,y)=\lim_{\substack{a\to\alpha\\ b\to\beta}}\frac{\left(S(x)-S(a)\right)\left(S(b)-S(y)\right)}{\left(S(b)-S(a)\right)}.
\end{align*}
Using this expression we can rewrite \eqref{eq G0} as
\begin{subequations}
\begin{align}
\p_x\left(\tau_y<\infty\right)&=\lim_{a\to\alpha}\frac{S(x)-S(a)}{S(y)-S(a)}.\label{eq hit time y}\\
\intertext{Similarly}
\p_x\left(\tau_z<\infty\right)&=\lim_{b\to\beta}\frac{S(b)-S(x)}{S(b)-S(z)}.\label{eq hit time z}
\end{align}
\end{subequations}

It follows from Proposition \ref{prop trans1} that $X$ is transient if and only if $S(\alpha)>-\infty$ or $S(\beta)<\infty$. If $S(\alpha)=-\infty$, we have $\p_x\left(\tau_y<\infty\right)=1$. If $S(\beta)=\infty$, we have $\p_x\left(\tau_z<\infty\right)=1$. If both $S(\alpha)$ and $S(\beta)$ are finite, letting $z\to\alpha$ and $y\to\beta$ we obtain (recalling that $S(\alpha)=0$)
\begin{align*}
\lim_{y\to\beta}\p_x\left(\tau_y<\infty\right)=\frac{S(x)}{S(\beta)}\quad&\text{ and }\quad \lim_{z\to\alpha}\p_x\left(\tau_z<\infty\right)=1-\frac{S(x)}{S(\beta)}.
\end{align*}
As these sum up to $1$, we conclude that also in this case $\p_x\left(\min\{\tau_z,\tau_y\}<\infty\right)=1$.
\end{proof}

A consequence of Lemma \ref{lemma hit} is that for $\alpha<a<b<\beta$ the stopping time $\tau_{(a,b)}$ is finite a.s. and that for a transient diffusion $\p_x\left(\lim_{t\to\infty}X_t=\alpha \text{ or }\lim_{t\to\infty}X_t=\beta\right)=1$.

\subsection{Infinite values}

We identify simple conditions under which the variance is infinite.

\begin{proposition}\label{prop inf}
Assume that one of the following holds.
\begin{enumerate}[(i)]
	\item\label{item i} $\beta=\infty$ and $\lim_{b\to\infty}\p_x\left(\tau_b<\infty\right)b^2=\infty$.
	\item $\alpha=-\infty$ and $\lim_{a\to-\infty}\p_x\left(\tau_a<\infty\right)a^2=\infty$.
\end{enumerate}
Then $V(x)=\infty$. 
\end{proposition}

\begin{proof}
As the cases are analogous, we only prove the case (i). First notice that for every $x\in(a,b)\subset \mathcal{I}$
\begin{align}
\Var_x\left\{X_{\tau_{(a,b)}}\right\}&=(a-b)^2\p_x\left(\tau_a<\tau_b\right)\p_x\left(\tau_b<\tau_a\right).\label{eq formula}
\end{align}

We split the proof in two according to whether $\p_x\left(\tau_b<\infty\right)=1$ for all $b>x$, or not. 

\begin{enumerate}[1.]
	\item Assume that $\p_x\left(\tau_b<\infty\right)=1$ for all $b>x$. Then we know that $\p_x(\tau_\alpha<\infty)=\lim_{a\to\alpha}\p_x(\tau_a<\infty)=0$. If this would not be true, $\alpha$ would be attainable and hence either absorbing or killing in which case the process would not anymore enter the interior of $\I$ after hitting $\alpha$. Thus this would violate the assumed equality $\p_x\left(\tau_b<\infty\right)=1$ for all $b>x$. 
	
	Furthermore, $X_t$ hits $n>x$ with probability 1 and by Lemma \ref{lemma hit} we know that $X_t$ exits from every interval $(a,n)$, $x\in(a,n)$, with probability 1 so that $\p_x\left(\tau_{a}<\tau_n\right)+\p_x\left(\tau_{a}>\tau_n\right)=1$. Combining this into the fact that $\p_x\left(\tau_\alpha<\infty\right)=0$ lets us to choose a decreasing sequence $a_n<x$, in such a way that $\p_x\left(\tau_{a_n}<\tau_n\right)=\p_x\left(\tau_{a_n}>\tau_n\right)=\frac{1}{2}$ for all $n\in\N$, $n>x$. Substituting these into \eqref{eq formula} gives
	\begin{align*}
	\Var_x\left\{X_{\tau_{(a_n,n)}}\right\}&=(a_n-n)^2\frac{1}{4}.
	\end{align*}
	As we let $n\to\infty$, this tends to infinity.
	\item Assume now that there exists $b^*>x$ such that $\p_x\left(\tau_b<\infty\right)<1-\delta$ for some $\delta\in(0,1)$ and all $b>b^*$. If $\alpha>-\infty$ then 
	\begin{align*} 
\Var_x\left\{X_{\tau_{(\alpha,b)}}\right\}&
=(b-\alpha)^2\p_x\left(\tau_b<\infty\right)\left(1-\p_x\left(\tau_b<\infty\right)\right)\\
 &>\p_x\left(\tau_b<\infty\right)(b-\alpha)^2\delta\to\infty,\quad\text{as $b\to\infty$.}
	\end{align*}
If $\alpha=\infty$ then take a descending sequence $a_n\rightarrow -\infty$. Since the process $X$ exits every compact interval almost surely, then $P(\tau_{a_n}<\tau_{b^*})>\delta$ for every $n$, and $P(\tau_{a_n}<\tau_{b^*})\rightarrow P(\tau_{b^*}=\infty)$ as $n\rightarrow \infty$. Thus
\begin{align*}
\Var_x\left\{X_{(a_n,b^*)}\right\}&=(b^*-a_n)^2P(\tau_{a_n}<\tau_{b^*})(1-P(\tau_{a_n}<\tau_{b^*}))\\
&>(b^*-a_n)^2\delta(1-P(\tau_{a_n}<\tau_{b^*}))\rightarrow \infty,\quad\text{as $n\to\infty$.}\qedhere
\end{align*}
\end{enumerate}
\end{proof}

\begin{example}
Let $X_t$ be a Brownian motion on $(0,\infty)$ with killing at $0$. Then $S(x)=x$, $S(0)=0$, $S(\infty)=\infty$, and 
	\[\p_x(\tau_b<\infty)b^2=\frac{S(x)}{S(b)}b^2=xb\to\infty\quad \text{as }b\to\infty.\] 
	Hence $X_t$ satisfies the condition \eqref{item i} from Proposition \ref{prop inf}, and consequently $V(x)=\infty$.
\end{example}

Basically, Proposition \ref{prop inf} says that if a diffusion is too likely to travel too far towards an unbounded end point the achievable variances are unbounded. Especially, as a corollary we see that attractive unbounded end point always leads to infinite values. 

\begin{corollary}\label{cor inf}
Let $X_t$ be transient, and assume that at least one unbounded boundary point is attractive. Then $V(x)\equiv\infty$.
\end{corollary}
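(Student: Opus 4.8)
The plan is to reduce the corollary directly to Proposition \ref{prop inf} by checking its hypothesis, so that the conclusion $V(x)=\infty$ follows immediately. Since the two boundaries play symmetric roles, I would treat the case where the attractive unbounded boundary is $\beta=\infty$, that is $S(\beta)<\infty$; the case $\alpha=-\infty$ with $S(\alpha)>-\infty$ is then handled identically, invoking part (ii) instead of (i) and interchanging the roles of the two endpoints. Thus it suffices to prove that when $\beta=\infty$ and $S(\beta)<\infty$ one has $\lim_{b\to\infty}\p_x\left(\tau_b<\infty\right)b^2=\infty$, which is precisely condition (i) of Proposition \ref{prop inf}.

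To estimate $\p_x\left(\tau_b<\infty\right)$ I would use the formula \eqref{eq hit time y}, namely
\[
\p_x\left(\tau_b<\infty\right)=\lim_{a\to\alpha}\frac{S(x)-S(a)}{S(b)-S(a)}.
\]
Two subcases arise from our normalisation $S(\alpha)=0$ or $S(\alpha)=-\infty$. If $S(\alpha)=-\infty$, then as $a\to\alpha$ both numerator and denominator tend to $+\infty$ at the same rate, giving $\p_x\left(\tau_b<\infty\right)=1$ for every $b>x$. If instead $S(\alpha)=0$, the formula collapses to $\p_x\left(\tau_b<\infty\right)=S(x)/S(b)$. Note that $S(\beta)<\infty$ already forces transiency by Proposition \ref{prop trans1}, so both subcases are genuinely compatible with the standing hypothesis.

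The crux is that in each subcase $\p_x\left(\tau_b<\infty\right)$ stays bounded away from zero as $b\to\infty$: it equals $1$ in the first subcase, while in the second it converges to $S(x)/S(\beta)>0$, since $S(b)$ increases to the finite limit $S(\beta)$ by attractivity of $\beta$ and $S(x)>0$ because $S$ is strictly increasing with $S(\alpha)=0$. Multiplying a quantity bounded below by a positive constant against $b^2\to\infty$ yields $\lim_{b\to\infty}\p_x\left(\tau_b<\infty\right)b^2=\infty$, so Proposition \ref{prop inf}(i) applies and $V(x)=\infty$.

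I do not anticipate any serious obstacle here; the argument is essentially a limit computation feeding into the already-proven Proposition \ref{prop inf}. The only points requiring minor care are reading off the hitting probability correctly under the normalisation $S(\alpha)=0$, and carrying out the symmetric bookkeeping for the $\alpha=-\infty$ case using \eqref{eq hit time z} in place of \eqref{eq hit time y}.
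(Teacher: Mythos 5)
Your proposal is correct and follows essentially the same route as the paper: both verify hypothesis (i) (resp.\ (ii)) of Proposition \ref{prop inf} by showing that $\p_x\left(\tau_b<\infty\right)$ stays bounded away from zero as $b\to\infty$. The only cosmetic difference is that the paper obtains the lower bound $\delta>0$ directly from the attractivity fact $\p\left(\lim_{t\to\infty}X_t=\beta\right)>0$, whereas you compute the same bound explicitly from the scale-function formula \eqref{eq hit time y}, splitting on $S(\alpha)=0$ versus $S(\alpha)=-\infty$.
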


\begin{proof}
Assume $\beta$ is an unbounded, attractive endpoint. That is, $\beta=\infty$ and $S(\beta)<\infty$. Then $\p(\lim_{t\to\infty}X_t=\beta)=\delta>0$ and thus $\p_x\left(\tau_b<\infty\right)>\delta$ for all $b>x$. Therefore, 
$\lim_{b\to\infty}b^2\p_x\left(\tau_b<\infty\right)=\infty$, and the claim follows from Proposition \ref{prop inf}. The case for the boundary $\alpha$ is analogous.
\end{proof}

\subsection{Assumptions to get finite variance}

As the recurrent case is quite simple to handle, we need to assume the diffusion to be transient. Moreover, the inspection of transient diffusions falls naturally into three parts: either exactly one of the end point is attractive or both are. That is, we assume that one of the following assumptions hold in order to get a finite, interesting problem.

\begin{assumption}\label{assumption}
\begin{enumerate}[C{a}se (I)]
	\item Let $\alpha>-\infty$ and assume that $\alpha$ is attractive and $\beta$ is not (i.e. $S(\alpha)=0$ and $S(\beta)=\infty$) and that $\lim_{b\to\beta}\p_x\left(\tau_b<\infty\right)b^2=0$.
	\item Let $\beta<\infty$ and assume that $\beta$ is attractive and $\alpha$ is not (i.e. $S(\alpha)=-\infty$ and $S(\beta)<\infty$), and that $\lim_{a\to\alpha}\p_x\left(\tau_a<\infty\right)a^2=0$.
	\item Let $-\infty<\alpha<\beta<\infty$ and assume that both end points are attractive (i.e. $S(\alpha)=0$ and $S(\beta)<\infty$).
\end{enumerate}
\end{assumption}

Two cases not covered in the Proposition \ref{prop inf} or Assumption \ref{assumption} are the ones where $X_t$ is transient, and $\beta=\infty$ with $\lim_{b\to\infty}\p_x\left(\tau_b<\infty\right)b^2\in(0,\infty)$ and $\alpha=-\infty$ with $\lim_{a\to-\infty}\p_x\left(\tau_a<\infty\right)a^2\in(0,\infty)$. These special cases are discussed at Section \ref{sec special case} together with a killed diffusion case.

It is a normal sight in non-discounted problems that one needs some transiency in order to get interesting results. However, we would like to stress that the quadratic nature of the problem \eqref{eq prob} forces quite a strong transiency requirement for the finiteness: it is not enough that the process is transience, but it also needs to wander sufficiently rarely toward an unbounded end point.

Lastly, the following additional technical assumption will help to simplify the general result when facing randomized stopping times. 

\begin{assumption}\label{assumption 2}
\begin{enumerate}[(I)]
\item Let the conditions of Case (I) from Assumption \ref{assumption} hold. For each $c\in\I$ let $Z(c)$ denote the set of $z$ maximizing the ratio $\frac{z^2-\alpha^2-2c(z-\alpha)}{S(z)}$. For each $c\in\I$ for which $Z(c)$ has more than one element we assume that
\begin{equation}\label{eq:asumpineqI}
\E_{\hat{z}_i}\left\{X_{\tau_{(\alpha,\hat{z}_s)}}\right\}>c,
\end{equation}
where $\hat{z}_i=\inf\{Z(c)\}$ and $\hat{z}_s=\sup\{Z(c)\}$.\label{as2 Case I}
\item Let the conditions of Case (II) from Assumption \ref{assumption} hold. For each $c\in\I$ let $Y(c)$ denote the set of $y$ maximizing the ratio $\frac{y^2-\beta^2-2c(y-\beta)}{S(\beta)-S(y)}$. For each $c\in\I$ for which $Y(c)$ has more than one element we assume that
\begin{equation}\label{eq:asumpineqII}
\E_{\hat{y}_s}\left\{X_{\tau_{(\hat{y}_i,\beta)}}\right\}>c,
\end{equation}
where $\hat{y}_i=\inf\{Y(c)\}$ and $\hat{y}_s=\sup\{Y(c)\}$.\label{as2 Case II}
\end{enumerate}
\end{assumption}
Notice that $Z(c)$ contains $\inf\{Z(c)\}$ and $\sup\{Z(c)\}$ since the ratio $\frac{z^2-\alpha^2-2c(z-\alpha)}{S(z)}$ is continuous in $z$. Similar observation holds for $Y(c)$.

This assumption is indeed quite a techinal one. However, the sets $Y(c)$ and $Z(c)$ typically only contain one element, and when the sets contain more than one element the inequalities \eqref{eq:asumpineqI} and \eqref{eq:asumpineqII} are typically fulfilled. It seems that diffusions that do not satisfy Assumption \ref{assumption 2} are very marginal ones, and one has to carefully construct a specific diffusion in order to find a counter example that does not satisfy the assumption above (cf. example in Subsection \ref{subsec ex random}).
 
\section{Results}\label{sec results}

\subsection{Case (I): $\alpha>-\infty$ is attractive while $\beta$ is not}

In this case, by \eqref{eq hit time y}, $\p_x\left(\tau_b<\infty\right)=\frac{S(x)}{S(b)}$, and consequently the condition  $\lim_{b\to\beta}\p_x\left(\tau_b<\infty\right)b^2=0$ can be written as $\lim_{b\to\beta}\frac{b^2}{S(b)}=0$. Later in the section we show examples of diffusions satisfying these conditions.  Notice that $\beta$ can be either finite or infinite. 

The following theorem solves the variance problem under the Case (I) totally. We have three different designings for our main theorem, depending how general assumptions we make.

\begin{theorem}\label{theo main}
Let $X_t$ be as in Agreement \ref{def X} on $\I=(\alpha,\beta)$. Fix $x\in\I$ and assume that Assumption \ref{assumption}(I) holds. Then $V(x)<\infty$. Furthermore:
\begin{enumerate}[(A)]
\item\label{theo main Gen} There exist $a^*\in[\alpha,x]$ and $b^*,z^*\in[x,\beta)$ such that $\tau^*=\xi_{p^*}\tau_{(a^*,b^*)}+(1-\xi_{p^*})\tau_{(\alpha,z^*)}$ is an optimal stopping time, where $\xi_{p^*}$ is a Bernoulli random variable with a parameter $p^*\in[0,1]$. 
(For a solution algorithm and discussions, see Subsection \ref{subsec algo}.)
\item\label{theo main Res} Assume in addition that Assumption \ref{assumption 2}(I) holds. Then there exist $z_1^*,z_2^*\in[x,\beta)$ such that $\tau^*=\xi_{p^*}\tau_{(\alpha,z_1^*)}+(1-\xi_{p^*})\tau_{(\alpha,z_2^*)}$ is an optimal stopping time, where $\xi_{p^*}$ is a Bernoulli random variable with a parameter $p^*\in[0,1]$.

\item\label{theo main Sim} Assume in addition that $S(x)$ is differentiable and $\frac{S'(z)}{S(z)}(z-\alpha)$ is non-decreasing. Then $\tau_{(\alpha,z^*)}$ is an optimal stopping time, where $z^*=z^*(x)$ is the unique solution on $(x,\beta)$ to 
	\begin{align}\label{eq z}
	\frac{S(z)-S(x)}{\frac{1}{2}S(z)-S(x)}=\frac{S'(z)}{S(z)}(z-\alpha).
	\end{align}
	Furthermore, the value reads as $V(x)=(z^*-\alpha)^2\frac{S(x)}{S(z^*)}\left(1-\frac{S(x)}{S(z^*)}\right)$.
\end{enumerate}
\end{theorem}

\begin{example}
In this example, we show that there are diffusions satisfying the conditions of the main theorem. 
\begin{itemize}
	\item As will be shown in examples (Section \ref{sec examples}) Geometric Brownian motion $dX_t=\mu X_t dt +\sigma X_t dW_t$, with $\sigma>0$ and $\mu<-\frac{1}{2}\sigma^2$ satisfies the conditions Case (I) in Assumption \ref{assumption}. Furthermore, as in this case $S(x)=\frac{x^{1-\frac{2\mu}{\sigma^2}}}{1-\frac{2\mu}{\sigma^2}}$, it can be straightforwardly checked that also the the monotonicity condition in \eqref{theo main Sim} is satisfied.
	\item For the Bessel process (see \cite[p. 137]{BorSal02}) $dX_t = \frac{2v+1}{2X_t} dt + dW_t$, and for it $S(x)=-\frac{x^{-2v}}{2v}$. For $v<-1$, $\I=(0,\infty)$ with $0$ as an exit boundary. Furthermore, when $v< -1$ we have $S(0)=0$, $S(\infty)=\infty$, and $\p_x(\tau_b<\infty)b^2=x^{-2v}b^{2+2v}$ so that the conditions of Assumption \ref{assumption}(I) are satisfied. Moreover, $\frac{S'(z)}{S(z)}(z-\alpha)\equiv -2v$ is a constant and hence also the monotonicity condition in \eqref{theo main Sim} is satisfied.
	\item CIR-process (see Chapter 2 in \cite{Kuznetsov04}) $X_t$ satisfies the stochastic differential equation $dX_t=(a-bX_t)dt + \sigma\sqrt{X_t}dW_t$, and its scale function $S(x)=\int_0^x y^{-A-1}e^{\theta y}dy$, where $A=\frac{2a}{\sigma}-1$, $\theta=\frac{2b}{\sigma}$ and $\sigma>0$. The range $\I=(0,\infty)$. If we choose $b>0$ and $A\leq -1$, then the boundary $0$ is exit and the Case (I) in Assumption \ref{assumption} is satisfied. 
\end{itemize}
\end{example}

The monotonicity condition in \eqref{theo main Sim} may look quite peculiar, but in concrete examples it is usually satisfied. It should be mentioned, however, that for complicated scale functions (e.g. for logistic diffusion) its precise examination may be laborious. Moreover, as is seen from the theorem, this monotonicity condition simplifies considerably the solution as under it, the optimal stopping time is an ordinary hitting threshold rule of the form $\tau_{(\alpha,z)}$. 

The general case in part \eqref{theo main Gen} is more complex to handle. In that case one can find an optimal stopping time which is one of the following : a hitting time of the form $\tau_{(\alpha,z^*)}$, a randomization between two such times, a randomization between $\tau_{(\alpha,z^*)}$ and $0$ (stop immediately, in the theorem indicated by a stopping rule $\tau_{(x,x)}$), or a randomization between $\tau_{(\alpha,z^*)}$ and $\tau_{(a^*,b^*)}$, where $\alpha<a^*< x< b^*<\beta$. In literature, the optimality of the first three types have been reported before in variance stopping problems applying geometric L\'evy processes (see \cite{GadPed15}). However, the last type of randomization has not been reported explicitly before. We give an example of such a case in Subsection \ref{subsec ex random}.

Although in the general case Theorem \ref{theo main} offers no explicit solution, the proof of the general result provide us an algorithm how to find the solution. The algorithm is presented in Subsection \ref{subsec algo}. It is based on the division of the state space into two regions: One where the randomized solution is optimal and the other where the usual hitting time policy is optimal.

The actual proof for Theorem \ref{theo main} is given in the next section. In practice, there are two ways to prove theorem. One could lean heavily on the verifiation theorem and proceed in the lines of \cite{GadPed15} and \cite{Pedersen11}. Here we take an alternative route and utilize a game theory, showing that at least this particular problem class can quite surprisingly be seen as an application of zero-sum games.

\subsection{Case (II): $\beta<\infty$ is attractive while $\alpha$ is not}

In this case, by \eqref{eq hit time z}, $\p_x\left(\tau_a<\infty\right)=\frac{S(\beta)-S(x)}{S(\beta)-S(a)}$, and consequently the condition  $\lim_{a\to\alpha}\p_x\left(\tau_a<\infty\right)a^2=0$ can be written as $\lim_{a\to\alpha}\frac{a^2}{S(a)}=0$. Notice that $\alpha$ can be finite or $-\infty$.

\begin{theorem}\label{theo mainII}
Let $X_t$ be as in Agreement \ref{def X} on $\I=(\alpha,\beta)$. Fix $x\in \I$ and assume that the conditions of Case (II) in Assumption \ref{assumption} holds. Then $V(x)<\infty$. Furthermore:
\begin{enumerate}[(A)]
\item\label{theo main II Gen}  There exist $a^*,y^*\in(\alpha,x]$ and $b^*\in[x,\beta]$ such that $\tau^*=\xi_{p^*}\tau_{(y^*,\beta)}+(1-\xi_{p^*})\tau_{(a^*,b^*)}$ is an optimal stopping time, where $\xi_{p^*}$ is a Bernoulli random variable with a parameter $p^*\in[0,1]$. 
\item\label{theo main II Res} If in addition to Assumption \ref{assumption}(II) also Assumption \ref{assumption 2}(II) holds, then there exist $y_1^*,y_2^*\in(\alpha,x]$ such that $\tau^*=\xi_{p^*}\tau_{(y_1^*,\beta)}+(1-\xi_{p^*})\tau_{(y_2^*,\beta)}$ is an optimal stopping time, where $\xi_{p^*}$ is a Bernoulli random variable with a parameter $p^*\in[0,1]$.

\item\label{theo main II Sim} Assume in addition to Assumption \ref{assumption}(II) that $S(x)$ is differentiable and $\frac{S'(y)}{S(\beta)-S(y)}(\beta-y)$ is non-increasing. Then $\tau_{(y^*,\beta)}$ is an optimal stopping time, where $y^*=y^*(x)$ is the unique solution on $(\alpha,x)$ to 
	\begin{align}\label{eq y}
	\frac{S(x)-S(y)}{S(x)-\frac{1}{2}S(y)-\frac{1}{2}S(\beta)}=\frac{S'(y)}{S(\beta)-S(y)}(\beta-y).
	\end{align}
		Furthermore, the value reads as $V(x)=(\beta-y^*)^2\frac{S(\beta)-S(x)}{S(\beta)-S(y^*)}\left(\frac{S(x)-S(y^*)}{S(\beta)-S(y^*)}\right)$.
\end{enumerate}
\end{theorem}

\begin{example}
Let us show that there are diffusions satisfying the conditions of Theorem \ref{theo mainII}. To that end, let $X_t$ be a diffusion on $(0,\beta)$ that satisfies the condition of Theorem \ref{theo main}. Then the diffusion $Y_t=-X_t$ on $(-\beta,0)$ defined through $\hat{S}(y)=-S(-y)$, $\hat{m}(y)=m(-y)$, $Y_0= -x$ is a well defined diffusion that satisfies the conditions of Theorem \ref{theo mainII}.
\end{example}

\subsection{Case (III): $-\infty<\alpha<\beta<\infty$, both end points attractive}
As the state space is now finite, the value is always bounded with $\frac{1}{4}(\beta-\alpha)^2$ and is hence finite. Furthermore, the diffusion hits one or the other end point almost surely as we let $t\to\infty$, with $\p_x(\lim_{t\to\infty}X_t=\beta)=\frac{S(x)}{S(\beta)}=1-\p_x(\lim_{t\to\infty}X_t=\alpha)$. 

In this case, the solution reads as follows.

\begin{theorem}\label{theo mainIII}
Let $X_t$ be as in Agreement \ref{def X} on $\I=(\alpha,\beta)$. Fix $x\in\I$ and assume that the conditions of Case (III) in Assumption \ref{assumption} holds. Then $V(x)<\infty$. Furthermore:
\begin{enumerate}[(I)]
\item Assume that $\E_x\left\{X_{\tau_{(\alpha,\beta)}}\right\}\leq\frac{1}{2}(\beta+\alpha)$. Then the statements in Theorem \ref{theo main} hold true when requiring that $z^*$ in \ref{theo main}\eqref{theo main Sim} is either a unique solution to \eqref{eq z}, or, if the root does not exist, $z^*=\beta$. \label{theo main III 1}

\item Assume that $\E_x\left\{X_{\tau_{(\alpha,\beta)}}\right\}>\frac{1}{2}(\beta+\alpha)$. Then the statements in Theorem \ref{theo mainII} hold true when requiring that $y^*$ in \ref{theo mainII}\eqref{theo main II Sim} is either a unique solution to \eqref{eq y}, or, if the root does not exist, $y^*=\alpha$.\label{theo main III 2}
\end{enumerate}
\end{theorem}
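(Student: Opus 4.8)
The plan is to obtain Theorem \ref{theo mainIII} from the analysis of Cases (I) and (II), using that on the compact interval $\I=(\alpha,\beta)$ with both endpoints attractive the two parts are interchanged by a spatial reflection. Finiteness is immediate, since $X_\tau\in[\alpha,\beta]$ gives $\Var_x\{X_\tau\}\le\tfrac14(\beta-\alpha)^2$, hence $V(x)<\infty$. To pass between the two parts I would use $\bar X_t:=\alpha+\beta-X_t$, again a regular diffusion on $(\alpha,\beta)$ with scale function $\tilde S(y)=S(\beta)-S(\alpha+\beta-y)$ (so $\tilde S(\alpha)=0$, $\tilde S(\beta)=S(\beta)<\infty$), which therefore still satisfies Case (III). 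As variance is invariant under the isometry $y\mapsto\alpha+\beta-y$, the value of the reflected problem $\bar V$ satisfies $V(x)=\bar V(\alpha+\beta-x)$, while $\E_x\{X_{\tau_{(\alpha,\beta)}}\}=\alpha+\beta-\E_{\alpha+\beta-x}\{\bar X_{\tau_{(\alpha,\beta)}}\}$; hence the hypothesis of Part \eqref{theo main III 2} for $X$ becomes the hypothesis of Part \eqref{theo main III 1} for $\bar X$, the exit times $\tau_{(\alpha,z)}$ correspond to $\tau_{(\alpha+\beta-z,\beta)}$, and \eqref{eq z} for $\bar X$ transcribes into \eqref{eq y} for $X$. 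It thus suffices to prove Part \eqref{theo main III 1}.

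For the pure policies, \eqref{eq formula} gives $\Var_x\{X_{\tau_{(\alpha,z)}}\}=(z-\alpha)^2\,p(z)\,(1-p(z))$ with $p(z)=S(x)/S(z)$, now a continuous function of $z$ on the compact interval $[x,\beta]$. Logarithmic differentiation shows its interior critical points are exactly the roots of \eqref{eq z}, and that $\tfrac{d}{dz}\Var_x\{X_{\tau_{(\alpha,z)}}\}$ has the sign of $\tfrac{2}{z-\alpha}-\tfrac{S'(z)}{S(z)}\tfrac{1-2p(z)}{1-p(z)}$. Here the threshold enters transparently: $\E_x\{X_{\tau_{(\alpha,\beta)}}\}=\alpha+(\beta-\alpha)S(x)/S(\beta)\le\tfrac12(\alpha+\beta)$ is equivalent to $S(x)\le\tfrac12 S(\beta)$, i.e. to $p(z)=\tfrac12$ being reached at some $z\in(x,\beta]$ (namely where $S(z)=2S(x)$). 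Only on the range where $p(z)<\tfrac12$ can the derivative vanish, so the monotonicity of $\tfrac{S'(z)}{S(z)}(z-\alpha)$ yields a unique interior maximizer $z^*$ when it exists; when it does not, the variance is increasing throughout $(x,\beta)$ and the maximum is attained at the endpoint $z^*=\beta$, i.e. at $\tau_{(\alpha,\beta)}$. This boundary alternative is the only genuinely new feature relative to Case (I), where $S(\beta)=\infty$ forces $S(x)\le\tfrac12 S(\beta)$ automatically and keeps $z^*$ strictly interior; the value formula $(z^*-\alpha)^2\tfrac{S(x)}{S(z^*)}\big(1-\tfrac{S(x)}{S(z^*)}\big)$ specializes correctly at $z^*=\beta$.

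To obtain global optimality — including the randomized policies of Theorem \ref{theo main}\eqref{theo main B} — I would re-run the argument of Theorem \ref{theo main}, either through the verification theorem of Sections \ref{sec 1.proof}, \ref{sec cases II & III} or the game-theoretic route of Section \ref{sec 2. proof}. In the game formulation $V(x)=\inf_c\sup_\tau\E_x\{(X_\tau-c)^2\}$, for fixed $c$ the inner problem is solved by the least concave majorant, in the scale coordinate $s=S(y)$, of $s\mapsto(S^{-1}(s)-c)^2$ on the now compact interval $[0,S(\beta)]$; the slope of the chord anchored at $\alpha$ up to a level $z$ is the ratio $\tfrac{z^2-\alpha^2-2c(z-\alpha)}{S(z)}$ from Assumption \ref{assumption 2}, whose maximizers are the contact points. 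I would check that, under $S(x)\le\tfrac12 S(\beta)$, the saddle centre $c^*$ places the active contact set on the $\alpha$-anchored branch, so that the optimal Bernoulli randomization (with $p^*$ fixed by $\E_x\{X_{\hat\tau}\}=c^*$, and with $\tau_1=\tau_{(\alpha,z_1^*)}$ under Assumption \ref{assumption 2}\eqref{as2 Case I}) is exactly that of Theorem \ref{theo main}\eqref{theo main B}, the only change being that the admissible upper level may now equal $\beta$.

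The main obstacle I anticipate is this last step: verifying that every part of the Case (I) argument which implicitly relied on $S(\beta)=\infty$ survives on the compact interval. In particular I must confirm that the infimum defining $c^*$ still exists and is attained once the upper boundary range is closed at the finite endpoint $\beta$, that no spurious contact point appears at $\beta$ that would break the $\alpha$-anchored description, and that the two parts agree at the threshold $S(x)=\tfrac12 S(\beta)$, where both reduce to $\tau_{(\alpha,\beta)}$. The reflection symmetry then guarantees the analogous claims for Part \eqref{theo main III 2}, so the whole difficulty is concentrated in re-examining the monotonicity and existence arguments of Case (I) over $[x,\beta]$ rather than over the half-line.
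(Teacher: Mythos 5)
Your overall architecture is the paper's own: finiteness via $\Var_x\{X_\tau\}\le\tfrac14(\beta-\alpha)^2$ is exactly the paper's opening remark; the reflection $\bar X_t=\alpha+\beta-X_t$ with scale $\tilde S(y)=S(\beta)-S(\alpha+\beta-y)$ is the same device the paper uses (the composition of the transformations \eqref{eq trans2}--\eqref{eq trans3}) to pass between the two attractive-boundary orientations, so reducing Part \eqref{theo main III 2} to Part \eqref{theo main III 1} is legitimate; and your first-order analysis of $z\mapsto(z-\alpha)^2p(z)(1-p(z))$ with $p(z)=S(x)/S(z)$ --- in particular that interior critical points require $p(z)<\tfrac12$, that this forces the endpoint alternative $z^*=\beta$ when no root of \eqref{eq z} exists, and that $\E_x\{X_{\tau_{(\alpha,\beta)}}\}\le\tfrac12(\alpha+\beta)$ is equivalent to $S(x)\le\tfrac12 S(\beta)$ --- correctly settles the pure-threshold statement \eqref{theo main A} under the monotonicity hypothesis.

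There is, however, a genuine gap, and you have located it yourself without closing it: the claim that for the relevant saddle level $c^*$ the embedded quadratic problem $\sup_\tau\E_x\{(X_\tau-c)^2\}$ is still solved by an $\alpha$-anchored threshold time, so that statement \eqref{theo main B} carries over. This is not a routine transcription of Lemma \ref{lemma stop}: in Case (III) the event $X_\tau=\beta$ has positive probability and $S(\beta)<\infty$, so the Case (I) argument (which used $\p_x(X_\tau=\beta)=0$ and $X_\infty=\alpha$ a.s.) does not apply verbatim, and a contact point of the concave majorant at the right endpoint of $[0,S(\beta)]$ must be excluded. This is exactly the new content of the paper's proof, Lemma \ref{lemma stop2}, and it is resolved by a short two-sided sign analysis rather than by re-examining limits at $\beta$: after translating to $\alpha=0$, write $y^2-2cy-\beta^2+2c\beta=(y-\beta)(y+\beta-2c)$; for $c\le\tfrac12\beta$ this makes the $\beta$-anchored ratio $\frac{y^2-2cy-\beta^2+2c\beta}{S(\beta)-S(y)}$ non-positive on $[0,\beta)$, hence $y_c=\beta$ and no $\beta$-anchored contact can be active, while $\frac{z^2-2cz}{S(z)}$ is positive precisely on $(2c,\beta]$, giving $z_c\in(2c,\beta]$ and the value $\frac{z_c^2-2cz_c}{S(z_c)}S(x)+c^2$; for $c>\tfrac12\beta$ the roles reverse. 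Note that the dichotomy in the embedded problem is governed by $c\lessgtr\tfrac12(\alpha+\beta)$, not directly by your condition $S(x)\lessgtr\tfrac12 S(\beta)$; linking the two through the fixed point $c^*=\E_x\{X_{\tau^*}\}$ is the residual bookkeeping the paper dispatches by treating the two sub-cases separately. Finally, your anticipated obstacle about attainment of $c^*$ points in the wrong direction: compactness helps rather than hurts, since $z_c\le\beta$ is now automatic (replacing Lemma \ref{lemma items}\eqref{Assume7}) and $S(X_t)$ is a bounded martingale rather than merely a supermartingale, so $\E_x\{S(X_\tau)\}=S(x)$ with no boundary condition at $\beta$; what genuinely must be supplied, and what your proposal only promises to check, is the content of Lemma \ref{lemma stop2}.
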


In Case (III) both end points are attractive. Consequently, not only the stopping boundary, but also whether the stopping boundary is above or below the current state, depends on the initial state. Specially, we use $\tau_{(0,z)}$ if the upper boundary $\beta$ is "closer" to the initial state $x$ and $\tau_{(y,\beta)}$ if the lower boundary is "closer", and this "closeness" is measured with $\E_x\left\{X_{\tau_{(\alpha,\beta)}}\right\}$.

\begin{example}
Here we show examples of process satisfying Case (III) of Assumption \ref{assumption}. In principle, it is very effortless to construct these diffusions. If we truncate a diffusion at the interior of its initial domain, and stipulate killing on the new boundaries, we can make practically every diffusion eligible to Case (III). As an example, take Brownian motion on $\I=(\alpha,\beta)$, with $-\infty<\alpha<\beta<\infty$, and impose killing at both end points. Then clearly $S(\alpha)=\alpha>-\infty$ and $S(\beta)=\beta<\infty$, and we immediately have a process with both boundaries being attractive and hence satisfying Assumption \ref{assumption}(III). 
\end{example}

\section{Proof of Theorem \ref{theo main} (Case (I))}\label{sec proof}
We only need to prove the Case (I), as Cases (II)--(III) can be returned to that case, as will be demonstrated in Section \ref{sec cases II & III}. Throughout the section we assume that $X_t$ is as in Agreement \ref{def X}, and that the conditions of Case (I) in Assumption \ref{assumption} hold.

We prove Theorem \ref{theo main} (Case (I)) by leaning on known results from game theory. For any random variable $Y$ we have $\Var(Y)=\inf_c\E_x\left\{(Y-c)^2\right\}$. Hence our variance problem can be written in the form
\begin{align}\label{eq game}
\sup_\tau\Var_x\left\{X_\tau\right\}=\sup_\tau\E_x\left\{\left(X_\tau-\E_x\left\{X_\tau\right\}\right)^2\right\}=\sup_\tau\inf_c\E_x\left\{\left(X_\tau-c\right)^2\right\}=:\sup_\tau\inf_cA(\tau,c;x).
\end{align}
Here, $A(\tau,c;x)$ is strictly convex with respect to $c$ (being parabola), and consequently we can interpret the problem as an infinite, strictly convex two-player zero-sum game. Before diving into the actual proof, we need to settle some background results.

First of all, in our proof of Theorem \ref{theo main}, we assume that $\alpha=0$. This simplifies arguments and causes no loss of generality. Indeed, let $X_t$ be as in Agreement \ref{def X} with $\alpha\in(-\infty,\infty)$, and let $S(x)$ be the scale function and $m(x)$ the speed measure associated to $X_t$. We assume, like earlier, that $S(\alpha)=0$ and that $X_t$ is as in Agreement \ref{def X}. 
If $\alpha\neq0$, we define an auxiliary process $Y_t$ on a state space $(0,\beta-\alpha)$ by defining a scale function $\hat{S}$, a speed measure $\hat{m}$, and a starting point $Y_0$ through
\begin{align}\label{eq trans}
\hat{S}(y):=S(y+\alpha),\quad \hat{m}(y):=m(y+\alpha),\quad Y_0=y:=x-\alpha.
\end{align}
Then $Y$ is a well defined diffusion on $(0,\beta-\alpha)$ inheriting its boundary behaviour from $X_t$, and we can now study the optimal variance stopping problem for $Y$

The desired solution concerning the diffusion $X_t$ can be retrieved by inverting the transformations in \eqref{eq trans}.

\subsection{Preliminaries I --- useful facts about an auxiliary optimal stopping problem}\label{subsec aux}
In the way to solve our game \eqref{eq game}, we need some information on $A(\tau,c;x)$ as well. For this reason, in this subsection we present some knowledge on the auxiliary embedded quadratic optimal stopping problem
\begin{align}\label{eq associated}
V^c(x)=\sup_\tau\E_x\left\{(X_\tau-c)^2\right\}, 
\end{align}
where $c\in\I$. This is a classical optimal stopping problem, and it is partially solved in the following lemma.

\begin{lemma}\label{lemma stop}
Let Assumption \ref{assumption}(I) hold. Furthermore, let $c\in\I$, $c<\frac{1}{2}\beta$, and let $z_c$ be the greatest point that maximizes $\frac{z^2-2cz}{S(z)}$. For $x\leq z_c$, $\tau_{(0,z_c)}$ is an optimal stopping time to \eqref{eq associated} and the value reads as
\begin{align}\label{eq optimal stop}
V^c(x)=\frac{{z_c}^2-2cz_c}{S(z_c)}S(x)+c^2.
\end{align}
If $\beta<\infty$ and $c\geq \frac{1}{2}\beta$, then $\tau_{\{0\}}$ is an optimal stopping time.

In addition, for all $x\in\I$, an optimal stopping time is the hitting time $\tau_{D_c}=\inf\{t\geq0\mid X_t\in D_c\}=\tau_{(a,b)}$ for some $a\leq x\leq b$, where $D_c=\{x\mid V^c(x)=(x-c)^2\}$ is the stopping set associated to the embedded problem with a parameter $c$. 
\end{lemma}

Notice that the lemma solves the auxiliary problem explicitly only partially, for $x\in(0,z_c)$, while generally stating that for every $x\in\I$ the optimal stopping time is a hitting time without further knowledge on the stopping region $D_c$. This is usually enough and in the few exceptional cases where the stopping region $D_c$ is needed, it must be investigated individually for those cases.  

\begin{proof}[Proof of Lemma \ref{lemma stop}]
\begin{enumerate}[1.]
	\item Assume first that $c<\frac{1}{2}\beta$. As $\p_x(\tau_{(0,z)}<\infty)=\frac{S(x)}{S(z)}$ by \eqref{eq hit time y}, we have for an arbitrary $z\geq x$ 
\begin{align}
\E_x\left\{(X_{\tau_{(0,z)}}-c)^2\right\}&=\left(z-c\right)^2\p_x(\tau_{(0,z)}<\infty)+ c^2\left(1-\p_x(\tau_{(0,z)}<\infty)\right)\notag\\
&=\frac{z^2-2cz}{S(z)}S(x)+c^2. \label{eq Ezc}
\end{align}
As we have assumed that $\lim_{z\to\beta}\frac{z^2}{S(z)}=0$, the maximizer of the ratio $\frac{z^2-2cz}{S(z)}$ must be smaller than $\beta$. On the other hand, the ratio $\frac{z^2-2cz}{S(z)}$ is non-positive for all $z\leq 2c$ and positive for all $z>2c$. Therefore, we see that there must be at least one point that maximizes $\frac{z^2-2cz}{S(z)}$, and it is between $(2c,\beta)$. Let $z_c$ be the greatest of such points (which exists since $\frac{z^2-2cz}{S(z)}$ is $z$-continuous). Recalling that $X_\infty=0$ a.s., we have for all stopping times $\tau$
\begin{align*}
\E_x\left\{(X_{\tau}-c)^2\right\}&=\E_x\left\{X_\tau^2-2cX_\tau \right\}+c^2\\
&=\E_x\left\{\frac{X_\tau^2-2cX_\tau}{S(X_\tau)}S(X_\tau)\mid X_\tau\neq 0\right\}\p\left(X_\tau\neq 0\right)\\
&+\overbrace{\E_x\left\{X_\tau^2-2cX_\tau \mid X_\tau=0\right\}}^{=0}\p\left(X_\tau=0\right)+c^2\\
&\leq \frac{z_c^2-2cz_c}{S(z_c)}\E_x\left\{S(X_\tau)\right\}+c^2\leq \frac{z_c^2-2cz_c}{S(z_c)}S(x)+c^2,
\end{align*}
where the first inequality follows by the maximality of $z_c$ and the second one follows from the fact that $S(X_t)$ is a positive local martingale and hence a supermartingale.  
As the value $\frac{z_c^2-2cz_c}{S(z_c)}S(x)+c^2$ is attained by $\tau_{(0,z_c)}$ for all $x<z_c$, it is an optimal stopping time for all $x\leq z_c$. 
\item Assume now that $c\geq\frac{1}{2}\beta$. We can see that the stopping time $\tau_{\{0\}}$ gives a value $c^2$. As $\frac{z^2-2cz}{S(z)}$ is negative for all $z\in(0,2c)$, by \eqref{eq Ezc} the value would be smaller than $c^2$ if stopping at $(0,2c)$. Consequently $(0,2c)$ belongs to a continuation region and we see at once that if $c\geq\frac{1}{2}\beta$, the optimal stopping time is $\tau_{\{0\}}$, i.e stop at zero. This could also be interpreted as $\tau_{(0,\beta)}$, as $\beta$ is never reached.

\item Finally, let us prove the optimality of $\tau_{D_c}$ for every $x\in\I$.  

By items above it is known that $(0,2c)$ is in the continuation region and that $z_c\in D_c$. Moreover, for all stopping times $\tau$ and sequences $b_n$ such that $b_n\to\infty$ as $n\to\infty$ we have, by Case (I) of Assumption \ref{assumption}, that $\lim_{n\to\infty}\E_x\left\{(X_{\tau\land\tau_{b_n}}-c)^2\right\}=\E_x\left\{(X_{\tau}-c)^2\right\}$. It follows now from \cite[Theorem 6.3(III)]{LamZer13} that $\tau_{D_c}$ is an optimal stopping time. Furthermore, as $X_t$ is a continous process, $\tau_{D_c}$ can be written, for a given x, as an exit time in the form $\tau_{(a,b)}$ for some $a\leq x \leq b$.
 \qedhere
\end{enumerate}
\end{proof}

Although our approach is a game theoretic one, we nevertheless need the following verification theorem. It will be used in the final conclusion to verify that the optimal stopping solution found within our restricted game theoretic setting is also optimal among all admissible stopping times.

\begin{proposition}[Theorem 2.1 in \cite{Pedersen11}]\label{theorem Pedersen}
Assume that a constant $c^*=c^*(x)$ is such that the value function for an auxiliary problem 
\[V^{c^*}(x)=\sup_\tau\E_x\left\{\left(X_\tau-c^*\right)^2\right\}\]
is finite and the optimal stopping time $\tau^{c^*}$ that produces the value $V^{c^*}(x)$ satisfies the condition $c^*=\E_x\left\{X_{\tau^{c^*}}\right\}$. Then $\tau^{c^*}$ is also an optimal stopping time for the variance stopping problem \eqref{eq prob}. 
\end{proposition}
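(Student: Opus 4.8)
The plan is to exploit the variational characterization of variance that the authors already highlighted in the introduction: for any random variable $Y$ one has $\Var(Y)=\inf_c\E\{(Y-c)^2\}$, with the infimum attained precisely at $c=\E\{Y\}$ (and with the convention that both sides equal $+\infty$ when $Y\notin L^2$). Reading the two hypotheses through this lens, the condition $c^*=\E_x\{X_{\tau^{c^*}}\}$ says that $c^*$ minimizes $c\mapsto\E_x\{(X_{\tau^{c^*}}-c)^2\}$, while the optimality of $\tau^{c^*}$ for the auxiliary problem says that $\tau^{c^*}$ maximizes $\tau\mapsto\E_x\{(X_\tau-c^*)^2\}$. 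Thus $(\tau^{c^*},c^*)$ is a saddle-point candidate for the payoff $(\tau,c)\mapsto\E_x\{(X_\tau-c)^2\}$, and the statement is essentially the weak-duality half of the associated minimax identity.

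First I would prove the upper bound $V(x)\le V^{c^*}(x)$. Fix an arbitrary $\tau\in\mathcal{T}$. Applying the variational identity and then specializing the infimum to the single value $c=c^*$ gives the chain $\Var_x\{X_\tau\}=\inf_c\E_x\{(X_\tau-c)^2\}\le\E_x\{(X_\tau-c^*)^2\}\le V^{c^*}(x)$, where the final inequality is just the definition of $V^{c^*}$ as a supremum. Taking the supremum over $\tau\in\mathcal{T}$ yields $V(x)\le V^{c^*}(x)<\infty$.

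Next I would identify the value attained by the candidate. Since $\tau^{c^*}$ is optimal for the auxiliary problem, $\E_x\{(X_{\tau^{c^*}}-c^*)^2\}=V^{c^*}(x)$; and since $c^*=\E_x\{X_{\tau^{c^*}}\}$, this quantity is exactly a variance, namely $\Var_x\{X_{\tau^{c^*}}\}=\E_x\{(X_{\tau^{c^*}}-\E_x\{X_{\tau^{c^*}}\})^2\}=\E_x\{(X_{\tau^{c^*}}-c^*)^2\}=V^{c^*}(x)$. Because $\tau^{c^*}$ is itself an admissible stopping time, it furnishes the lower bound $V(x)\ge\Var_x\{X_{\tau^{c^*}}\}=V^{c^*}(x)$. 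Combining this with the upper bound of the previous paragraph forces $V(x)=V^{c^*}(x)=\Var_x\{X_{\tau^{c^*}}\}$, which is precisely the assertion that $\tau^{c^*}$ solves the variance problem \eqref{eq prob}.

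I do not expect a genuine obstacle here, since only the elementary inequality $\sup_\tau\inf_c\le\inf_c\sup_\tau$ is invoked rather than a full minimax equality. The only points needing care are bookkeeping ones: I would state the variational identity with the $+\infty$ convention so that the upper-bound chain stays valid even for stopping times with $\E_x\{X_\tau^2\}=\infty$ (where every term is $+\infty$ and the inequality is vacuous), and I would note that the hypothesis $V^{c^*}(x)<\infty$ guarantees $X_{\tau^{c^*}}\in L^2$, so that $c^*=\E_x\{X_{\tau^{c^*}}\}$ is finite and $\Var_x\{X_{\tau^{c^*}}\}$ is well defined.
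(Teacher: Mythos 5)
Your proof is correct and is essentially the same argument that stands behind this result: the paper itself states the proposition without proof, importing it verbatim from \cite{Pedersen11}, and the proof there is exactly your weak-duality chain $\Var_x\left\{X_\tau\right\}=\inf_c\E_x\left\{(X_\tau-c)^2\right\}\leq\E_x\left\{(X_\tau-c^*)^2\right\}\leq V^{c^*}(x)$ for arbitrary $\tau$, combined with the observation that the hypotheses force equality throughout at $\tau^{c^*}$ since $c^*=\E_x\left\{X_{\tau^{c^*}}\right\}$ turns $\E_x\left\{(X_{\tau^{c^*}}-c^*)^2\right\}$ into $\Var_x\left\{X_{\tau^{c^*}}\right\}$. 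Your bookkeeping remarks (the $+\infty$ convention for $X_\tau\notin L^2$, and finiteness of $V^{c^*}(x)$ guaranteeing $X_{\tau^{c^*}}\in L^2$ so that $c^*$ and the variance are well defined) are exactly the right points of care, so there is nothing to add.
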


\subsection{Preliminaries II --- useful facts about zero-sum games}
In this subsection we present some useful facts from game theory, which help solving the game \eqref{eq game}.

\begin{definition}
\begin{enumerate}[(A)]
	\item We say that a game has a \emph{value $V$}, if 
\begin{align*}
\sup_\tau\inf_cA(\tau,c;x)=\inf_c\sup_\tau A(\tau,c;x).
\end{align*}
If the value exists, we can write $V=\inf_c\sup_\tau A(\tau,c;x)=\sup_\tau\inf_cA(\tau,c;x)$.
\item A \emph{pure strategy} is a strategy that uses a single stopping time $\tau$ or a state $c\in\I$.
\item A \emph{mixed strategy} is a strategy that mixes pure strategies using some known probability distribution (cf. randomized stopping times in Definition \ref{def random stopping time}). 
\item If a game has a value $V$ and there exists pure or mixed strategies $\tau^*$ and $c^*$ for which $A(\tau^*,c;x)\geq V$ for all $c$ and $A(\tau,c^*;x)\leq V$ for all $\tau$, we call $\tau^*$ and $c^*$ \emph{optimal strategies} for sup- and inf-player, respectively.
\item Let $c^*$ be an optimal strategy for the inf-player. We call a pure strategy $\tau$ \emph{essential}, if $V=A(\tau,c^*;x)$. 
\end{enumerate}
\end{definition}
We will only need mixed strategies of the form $\hat{\tau}:=\xi_p \tau_1+(1-\xi_p)\tau_2$, where $\xi_p$ is Bernoulli random variable with $p\in[0,1]$ (see Bernoulli randomized stopping time in Definition \ref{def Bernoulli}). Notice also that even if $\tau$ is essential, it is not necessarily an optimal strategy as we may have $\inf_c A(\tau,c;x)<A(\tau,c^*;x)$ (cf. example in Subsection \ref{subsec ex random}).

We have the following known result concerning the value and optimal strategies regarding infinite strictly convex zero-sum games on compact regions (Corollaries 2.2 and 2.3, and Section 5 in \cite{Bohnenblust50}, see also Theorem 4.3.1 in \cite{Karlin59}).

\begin{proposition}\label{theo Karlin} Assume that a payoff function $B(x,y):\mathcal{X}\times \mathcal{Y}\to \R$ of a game is continuous on both variables, that $B(x,y)$ is $y$ -strictly convex,  that the pure strategies $y$ of the inf-player takes value on compact connected set $\mathcal{Y}\subset \R$, and that the pure strategies $x$ of the sup-player takes value on compact, convex set $\mathcal{X}$ from $n$-dimensional Euclidean space. 
Then the game has a value using mixed strategies. Furthermore: 
\begin{enumerate}[(A)]
	\item the sup-player has an optimal mixed strategy involving at most $2$ pure strategies. That is, there is an optimal strategy of the form $\xi_px_1+(1-\xi_p)x_2$, where the sup-player applies a pure strategy $x_1\in\mathcal{X}$ with probability $p$ and $x_2\in\mathcal{X}$ with probability $1-p$. Moreover, these pure strategies $x_1$ and $x_2$ are essential strategies. 
	\item the inf-player has a unique optimal strategy that is a pure strategy $y^*\in\mathcal{Y}$ which minimizes $\sup_{X\in\mathcal{M}(\mathcal{X})}B(X,y)$, where $\mathcal{M}(\mathcal{X})$ is the set of all mixed strategies of the form $\xi_px_1+(1-\xi_p)x_2$, where $p\in[0,1]$ and $x_1$, $x_2\in\mathcal{X}$.
\end{enumerate}
\end{proposition}

Proposition \ref{theo Karlin} concerns games where the strategies of the sup-player takes values in a compact convex Euclidean space, whence \emph{a priori} the strategy set of the sup-player in our game \eqref{eq game} is the set of all stopping times. We shall overcome this discrepancy by writing stopping times as hitting times with two boundaries, as these boundaries take values from real line. This procedure will eventually allow us to utilize Proposition \ref{theo Karlin} for a restricted stopping problem where only hitting times are considered. 

Next we justify that the two games --- one with stopping time strategies from $\mathcal{T}$ and the other one with stopping boundary strategies from $\mathbb{R}^2$ -- are interchangeable.

Suppose we have a game given by a payoff $A_{\hat{\mathcal{T}}}(\tau,c;x)=\E_x\{f(X_\tau,c)\}$ where the pure strategies of the sup-player take values in $\hat{\mathcal{T}}=\{\tau_{(a,b)}|(a,b)\in \mathcal{X}\subset \mathbb{R}^2\}$ and the pure strategies of the inf-player take values in $\mathcal{Y}\subset\mathbb{R}$. Define another game by a payoff $A_{\mathcal{X}}((a,b),c;x)=\E_x\{f(X_{\tau_{(a,b)}},c)\}$, where the pure strategies of the sup-player take values in $\mathcal{X}\subset \mathbb{R}^2$ and the pure strategies of the inf-player take values in $\mathcal{Y}\subset\mathbb{R}$. It is quite obvious that the games $A_{\hat{\mathcal{T}}}$ and $A_\mathcal{X}$ are interchangeable with pure strategies. However, it may not be so obvious that they are equivalent also for mixed strategies mixing two pure strategies. This is proven in the next lemma.

\begin{lemma}
Let the payoff functions $A_{\hat{\mathcal{T}}}$ and $A_{\mathcal{X}}$ be as above and let the sets $\mathcal{X}\subset \R^2$ and $\mathcal{Y}\subset \R$. Furthermore, let $(a_i,b_i)\in\mathcal{X}$, $a_i\leq x\leq b_i$, for $i=1,2$, be pure strategies for the game $A_{\mathcal{X}}$ and let $p\in(0,1)$. Then with a mixed strategy $(A,B)=\xi_p(a_1,b_1)+(1-\xi_p)(a_2,b_2)$ we have $A_{\mathcal{X}}((A,B),c;x)=A_{\hat{\mathcal{T}}}(\tau_{(A,B)},c;x)$, where $\tau_{(A,B)}$ is a mixed stopping strategy $\xi_p\tau_{(a_1,b_1)}+(1-\xi_p)\tau_{(a_2,b_2)}$.
\end{lemma}

\begin{proof}
From conditional expectation we get:
\begin{align*}
A_{\mathcal{X}}((A,B),c;x)&=p\E_x\{f(X_{\tau_{(a_1,b_1)}},c)\}+(1-p)\E_x\{f(X_{\tau_{(a_2,b_2)}},c)\}\\
 &=A_{\hat{\mathcal{T}}}(\tau_{(A,B)},c;x)
\end{align*}
proving the claim.
\end{proof}


\subsection{Solving the game and proving the main theorem}\label{subsec solve game}
Recall our assumptions: we assume that Case (I) in Assumption \ref{assumption} holds and that $\alpha=0$. Let $x\in(0,\beta)$ be fixed.

Our approach for proving Theorem \ref{theo main} (Case (I)) is to use Proposition \ref{theo Karlin} on a two-player zero-sum game related to our problem. However, Proposition \ref{theo Karlin} only provide a solution for a game where strategies are restricted to a compact set. For that reason we define next a restricted two-player zero-sum game where strategies are restricted to certain compact sets. Then Proposition \ref{theo Karlin} will provide a solution, which will translate to a solution for a variance problem where variance is maximized over a certain restricted set of hitting times. By choosing the compact strategy sets wisely, the verification theorem can be used afterwards to verify that the this solution is also a solution to the initial variance problem.

First, we construct proper bounds to be used for the strategies of the restricted game. Analysis similarly to the proof of Lemma \ref{lemma stop}, shows that there exists finitely the greatest point that maximizes the ratio $\frac{u}{S(u)}$ for $u\geq x$. Call it $u^*$. That is, $u^*:= \sup\{u|\forall v \in [x,\beta): \tfrac{u}{s(u)}\geq \tfrac{v}{s(v)}\}<\beta$. Further, define 
\[M_x:=2\frac{u^*}{S(u^*)}S(x).\] 
We define the strategy set for the inf-player as the compact set $[0,M_x]$. 

Next we construct a proper compact strategy set of the form $[0,x]\times[x,B_x]$ for the sup-player. If $\beta<\infty$ we can choose $B_x=\beta$. So assume that $\beta=\infty$. Then we can define 
\[\bar{b}=\begin{cases}
3M_x,&M_x\geq x\\
3x,& M_x<x
\end{cases}\]
We can rewrite the pay-off function as
\begin{align}
A((a,b),c;x)&=(a-c)^2+\left((b-c)^2-(a-c)^2\right)\p_x(\tau_b<\tau_a)\notag\\
&=(a-c)^2+\frac{(b-c)^2-(a-c)^2}{S(b)-S(a)}\left(S(x)-S(a)\right).\label{eq Aabc}
\end{align}
Now, define 
\[\varepsilon:= \frac{(\bar{b}-M_x)^2-(\max(x^2,M_x^2)}{S(\bar{b})}>0.\]
Then from \eqref{eq Aabc} we know that for every given $a\in[0,x]$ and $c\in[0,M_x]$ we have $A(a,\bar{b},c;x)\geq (a-c)^2+\varepsilon\left(S(x)-S(a)\right)$. As $\frac{(b-c)^2-(a-c)^2}{S(b)-S(a)}$ approaches to zero as $b$ tends to $\beta$ by Assumption \ref{assumption}(I), there exists $\tilde{b}_x<\infty$ such that  $\frac{(b-c)^2-(a-c)^2}{S(b)-S(a)}<\varepsilon$ for all $b>\tilde{b}_x$ and for all $a\in[0,x]$ and $c\in[0,M_x]$. Hence, if $\beta=\infty$, we can choose $B_x:=\tilde{b}_x$. It follows that for all $a\in[0,x]$, $c\in[0,M_x]$, and $b>B_x$ we have
\begin{align*}
A((a,\bar{b}),c;x)&\geq(a-c)^2+\varepsilon\left(S(x)-S(a)\right)\geq (a-c)^2+\frac{(b-c)^2-(a-c)^2}{S(b)-S(a)}\left(S(x)-S(a)\right)\\
&=A((a,b),c;x),
\end{align*}
and there is equality only with the choice $a=x$. In other words, no optimal strategy can exceed $B_x$. Now, it holds from Lemma \ref{lemma stop} that both for $\beta<\infty$ and $\beta=\infty$ we have
$$\sup_{\tau\in\mathcal{T}}\EE_x\{(X_\tau-c)^2\}=\sup_{(a,b)\in [0,x]\times[x,B_x]}\EE_x\{(X_{\tau_{(a,b)}}-c)^2\}.$$
With this as our motivation we shall define the strategy of the sup-player as $[0,x]\times[x,B_x]$ in the restricted game.

\begin{descrip}\label{definition sets}
For $\beta=\infty$ and a fixed $x\in(0,\infty)$ we define a payoff function for a two-player zero-sum game through $A((a,b),c;x)=\E_x\left\{(X_{\tau{(a,b)}}-c^2)\right\}$. The pure strategies of the inf-player are $c\in[0,M_x]$, and the pure strategies of the sup-player are $(a,b)\in[0,x]\times [x,B_x]$. For $\beta<\infty$, we can choose $M_x=B_x=\beta$. As $x$ is fixed, we can ease the notations by defining $A((a,b),c):=A((a,b),c;x)$.

With these choices, our game is to find a value $V$ for the following game: 
\begin{align}\label{eq game restricted}
\sup_{(a,b)\in [0,x]\times [x,B_x]}\inf_{c\in[0,M_x]}A((a,b),c)=\inf_{c\in[0,M_x]}\sup_{(a,b)\in[0,x]\times [x,B_x]}A((a,b),c).
\end{align}
\end{descrip}

Now, we state our main existence result in the game theoretic framework. In Proposition \ref{prop game} that follows it, we find further characteristics on our game. Recall that $z_c$ is the greatest state that maximizes $\frac{z^2-2cz}{S(z)}$.

\begin{theorem}\label{theo main game}
Let $x\in(0,\beta)$ be fixed. Assume that Assumption \ref{assumption}(I) holds and consider the game of Description \ref{definition sets}. Then, there exist optimal strategies $c^*$ and $y^*$ such that 
\[V=A(y^*,c^*),\]
where, $c^*$ is the unique pure strategy on $[0,M_x]$ that minimizes 
\[\sup_{(a,b)\in[0,x]\times[x,B_x]}A((a,b),c).\]
Furthermore, $y^*$ is potentially a mixed strategy of the form $\xi_p(a^*,b^*)+(1-\xi_p)(0,z_{c^*})$, for some $(a^*,b^*)\in[0,x]\times[x,B_x]$, and where $\xi_p$ is a Bernoulli random variable with a parameter $p\in[0,1]$. Moreover, $(a^*,b^*)$ and $(0,z_{c^*})$ are essential strategies.
\end{theorem}

\begin{proof}
We can now readily apply Proposition \ref{theo Karlin} using compact strategy sets $\mathcal{Y}=[0,M_x]$ and $\mathcal{X}=[0,x]\times[x,B_x]$ together with a strictly $c$-convex payoff function $A((a,b),c)$. That $c^*$ can be reached as a minimizer for $\sup_{(a,b)\in[0,x]\times[x,B_x]}A((a,b),c)$ (using pure strategies rather than mixed) follows from the fact (Lemma 2.4.7 in \cite{Vor77}) that
$$\sup_{(a,b)\in[0,x]\times[x,B_x]} A((a,b),c)=\sup_{(a,b)\in\mathcal{M}([0,x]\times[x,B_x])} A((a,b),c),$$ 
where $\mathcal{M}([0,x]\times[x,B_x])$ denotes the mixed strategies on $[0,x]\times[x,B_x]$.

To show that one of the essential strategies given by Proposition \ref{theo Karlin} is of the form $(0,z)$, let us show that $c^*\geq\hat{c}$, where $\hat{c}$ is the smallest $c$ for which $z_c$ is greater than $x$ i.e. $\hat{c}:=\inf\{c\mid z_c>x\}$. 

Let $c<\hat{c}$. Then 
\begin{align*}
\sup_{(a,b)}A((a,b),c)\geq (x-c)^2>(x-\hat{c})^2.
\end{align*}
Here the first inequality follows from the fact that stopping immediately is an admissible strategy, and the second inequality from the fact that by the definition of $\hat{c}$ we have also $c<\hat{c}<x$ as $z_{\hat{c}}>2\hat{c}$. To deduce further, let $\tilde{x}\geq x$ be the starting point for which $\tilde{x}=z_{\hat{c}}$. Then,
\begin{align*}
(x-\hat{c})^2\geq (\tilde{x}-\hat{c})^2=\sup_{z}\E_{\tilde{x}}\left\{(X_{\tau_{(0,z)}}-\hat{c})^2\right\}=\sup_{(a,b)}\E_{\tilde{x}}\left\{(X_{\tau_{(a,b)}}-\hat{c})^2\right\},
\end{align*}
where the last equality follows from Lemma \ref{lemma stop}. As  $\sup_{(a,b)}\E_{x}\left\{(X_{\tau_{(a,b)}}-\hat{c})^2\right\}$ is $x$-increasing for $x>\hat{c}$, we lastly have
\begin{align*}
\sup_{(a,b)}\E_{\tilde{x}}\left\{(X_{\tau_{(a,b)}}-\hat{c})^2\right\}\geq \sup_{(a,b)}\E_{x}\left\{(X_{\tau_{(a,b)}}-\hat{c})^2\right\}=\sup_{(a,b)}A((a,b),\hat{c}).
\end{align*}
All in all, we have shown that when $c<\hat{c}$ 
\[\sup_{(a,b)}A((a,b),c)>\sup_{(a,b)}A((a,b),\hat{c}).\]
As $c^*$ is the unique $c$ that minimizes $\sup_{(a,b)}A((a,b),c)$, this indicates that the minimizer $c^*$ cannot be smaller than $\hat{c}$. It follows that $(0,z_{c^*})$ is an admissible essential strategy.
\end{proof}

Notice that Theorem \ref{theo main game} translates into 

$$V=\sup_{\tau\in \mathcal{M}(\mathcal{X})}\inf_{c \in [0,M_x]}\E_x\left\{(X_{\tau}-c)^2\right\}=\E_x\left\{(X_{\tau_{y^*}}-c^*)^2\right\},$$
where $\mathcal{X}=\{\tau_{(a,b)}\mid (a,b)\in[0,x]\times[x,B_x]\}$ and where $\mathcal{M}(\mathcal{X})=\{\xi_p \tau_1+(1-\xi_p)\tau_2\mid p\in[0,1],\ \tau_1,\tau_2\in\mathcal{X}\})$. 
This kind of translation between an optimal stopping problem and a game may be valuable also when working with more complex non-linear optimal stopping problems.

Next we will get a better inspection on the exact solution of our game. In the proofs to come, we will use an auxiliary restricted game, where the sup-player's strategy is only one-dimensional strategy $(0,z)$, $z\in[x,B_x]$. In this restricted setting the game with pure strategies becomes
\begin{align}\label{eq rest game}
 \sup_{z\in[x,B_x]}\inf_{c\in[\hat{c},M_x]}A((0,z),c).
\end{align} 
By Theorem \ref{theo main game} the optimal $c^*$ and at least one essential strategy for the sup-player of the initial game of Description \ref{definition sets} is found within the pure strategies of this restricted game. To see this, notice that for every $c\in[\hat{c},M_x]$ we have 
\[\sup_{(a,b)}A((a,b),c)=\sup_{z}A((0,z),c),\]
Hence, also the minimum over these two games are the same (cf. Proposition \ref{theo Karlin}).

The following lemma is a straight consequence from Theorem 2.12.5 from \cite{Vor77} concerning convex, continuous zero-sum games on a unit square. 

\begin{lemma}\label{lemma A}
Let Assumption \ref{assumption}(I) hold and let $x\in\I$ be fixed. Consider the restricted game \eqref{eq rest game}.
\begin{enumerate}[(A)]
	\item If $\hat{c}\leq c^*<M_x$, then there exists an essential strategy $(0,z')$ satisfying
\[A'_c((0,z'),c^*)\geq0.\]
\item If $\hat{c}<c^*<M_x$, then there exist essential strategies $(0,z')$ and $(0,z'')$ satisfying
\begin{align}\label{eq A}
A'_c((0,z'),c^*)\geq0\quad\text{and}\quad A'_c((0,z''),c^*)\leq0.
\end{align}
\end{enumerate}
\end{lemma}

\begin{proposition}\label{prop game}
Let the assumptions of the Theorem \ref{theo main game} hold and consider the game of Description \ref{definition sets}. Let $c^*$ be the optimal strategy of the inf-player. Then
\begin{enumerate}[(A)]
	\item\label{item c*} $c^*\in[\hat{c},M_x)$, where $\hat{c}:=\inf\{c>0\mid z_c>x\}$.
	\item\label{item pilkku} essential strategies $(a^*,b^*)$ and $(0,z_{c^*})$ from Theorem \ref{theo main game} satisfy 
	\begin{align}\label{eq conditions}
	\begin{aligned}
	A'_c((0,z_{c^*}),c^*)&\geq0 \text{ and }\\
	A'_c((a^*,b^*),c^*)&\leq0
	\end{aligned}
	\end{align}
	\item\label{item p} Let $a^*,b^*$, and $z_{c^*}$ as in item \eqref{item pilkku}. Then the sup-player has an optimal strategy where the strategies $(0,z_{c^*})$ and $(a^*,b^*)$ are utilized with probabilities $p$ an $1-p$, respectively, where $p$ is determined by
	\begin{align}\label{eq p condition}
	pA'_c((0,z_{c^*}),c^*)+(1-p)A'_c((a^*,b^*),c^*)=0.
	\end{align}
\end{enumerate}
\end{proposition}

\begin{proof}
\begin{enumerate}[(A)]
	\item Let us first prove that the inf-player chooses $c^*\in[0,M_x)$. In this part of the proof, we utilize the interplay between $A((a,b),c)$ and $A(\tau_{(a,b)},c)$ and choose $\mathcal{T}_\mathcal{X}=\{\tau_{(a,b)}\mid (a,b)\in [0,x]\times[x,B_x]\}$ and $\mathcal{M}(\mathcal{T}_\mathcal{X})=\{\xi_p \tau_1+(1-\xi_p)\tau_2\mid p\in[0,1],\ \tau_1,\tau_2\in\mathcal{T}_\mathcal{X}\})$. As has been demonstrated above, there exists $u^*$ maximizing $\frac{u}{S(u)}$ on $u\in[x,\beta)$. Now, for any $\tau\in\mathcal{M}(\mathcal{T}_\mathcal{X})$ 
\begin{align}\label{eq calculation u}
\begin{aligned}
\E_x\{X_{\tau}\}=\E_x\left\{\frac{X_{\tau}}{S(X_{\tau})}S(X_{\tau})\right\}&\leq \frac{u^*}{S(u^*)}\left(p\E_x\left\{S(X_{\tau_1})\right\}+(1-p)\E_x\left\{S(X_{\tau_2})\right\}\right)\\
&\leq\frac{u^*}{S(u^*)}S(x)=\frac{1}{2}M_x,
\end{aligned}
\end{align}
where the last inequality follows from the fact that as a positive local martingale $S(x)$ is a supermartingale. 

Assume that the sup-player chooses $\tau\in\mathcal{M}(\mathcal{T}_\mathcal{X})$. Then the inf-player is faced with the problem to minimize 
\[A(\tau,c)=\E_x\left\{X_{\tau}^2\right\}-2c\E_x\left\{X_{\tau}\right\}+c^2.\]
Now, the inf-player can only affect to the term $-2c\E_x\left\{X_{\tau}\right\}+c^2$. It is a parabola, and hence by choosing $c=0$ it vanishes and choosing $c=M_x$ it is non-negative as $M_x \geq 2\E_x\left\{X_{\tau}\right\}$ by \eqref{eq calculation u}. It follows that for any strategy $\tau$ announced by the sup-player, the inf-player rather chooses $c=0$ than $c=M_x$ and so the minimizer of $\sup_{\tau\in\mathcal{M}(\mathcal{T}_\mathcal{X})}A(\tau,c)$ must be on $[0,M_x)$. 

Lastly, in the proof of Theorem \ref{theo main game} we already showed that $c^*\geq\hat{c}$.

\item[\eqref{item pilkku}--\eqref{item p}] Similar results are given in Lemmas 2.12.2 and 2.12.3 and Theorem 2.12.5(3) in \cite{Vor77} for games where the sup-player has one-dimensional strategy set. However, those proofs work unaltered also in our two-dimensional case. Hence, all we need to show is that for the two essential strategies, $(a^*,b^*)$ and $(0,z_{c^*})$, the inequalities are as in \eqref{eq conditions}. 

Let us now study further the restricted game \eqref{eq rest game}, where the sup-player's strategy set is one-dimensional. We will show that in the notations of Lemma \ref{lemma A} we can choose $z'=z_{c^*}$, i.e. $A'_c((0,z_{c^*}),c^*)\geq0$. First of all, by Theorem \ref{theo main game} $c^*$ is unique and $(0,z_{c^*})$ is an essential strategy by Lemma \ref{lemma stop} and admissible as $c^*\geq \hat{c}$ by item \eqref{item c*}. 

Suppose, contrary to our claim, that $A'_c((0,z_{c^*}),c^*)<0$. Then by Lemma \ref{lemma A} there must be $z_2<z_{c^*}$ maximizing $\frac{z^2-2c^*z}{S(z)}$, for which $A'_c((0,z_2),c^*)\geq0$, as only these kind of stopping thresholds are essential strategies for the game. We can certainly assume that $A'_c((0,z_2),c^*)=0$. Indeed, as $A'_c((0,z_2),c^*)=2(c^*-\frac{z_2}{S(z_2)}S(x))$, we can choose $\tilde{x}\geq x$ so that $2(c^*-\frac{z_2}{S(z_2)}S(\tilde{x}))=0$, whence $c^*$ is still the optimal strategy for the inf-player, $A'_c((0,z_{c^*}),c^*)<0$, and both $(0,z_2)$ and $(0,z_{c^*})$ are essential strategies. Hence, let $A'_c((0,z_2),c^*)=0$ and $A'_c((0,z_{c^*}),c^*)<0$, which translates to
\[\frac{z_2}{S(z_2)}S(x)=c^*<\frac{z_{c^*}}{S(z_{c^*})}S(x)\quad \Longleftrightarrow \quad \frac{z_2}{S(z_2)}<\frac{z_{c^*}}{S(z_{c^*})}.\]

We can now make the following deduction
\begin{align*}
A((0,z_2),c^*)&=A((0,z_{c^*}),c^*)=\frac{z_{c^*}}{S(z_{c^*})}\left(z_{c^*}-2c^*\right)S(x)+(c^*)^2\\
&>\frac{z_{2}}{S(z_{2})}\left(z_{2}-2c^*\right)S(x)+(c^*)^2=A((0,z_2),c^*),
\end{align*}
which is impossible. It follows that we must have $A'_c((0,z_{c^*}),c^*)\geq0$. \qedhere
\end{enumerate}
\end{proof}

\begin{remark}\label{remark game}
A few remarks about Proposition \ref{prop game} above.
\begin{enumerate}[1.]
	\item The item \eqref{item c*} guarantees that the restricted strategy set for the inf-player is not too restricted. Specifically, that $c^*$ is in the interior of $[0,M_x]$ means that both inequalities in \eqref{eq conditions} in the item \eqref{item pilkku} are satisfied for some essential stopping times.
	\item We see from items \eqref{item pilkku}--\eqref{item p} that in order to find an optimal solution, we need to find essential strategies $(0,z^*)$ and $(a^*,b^*)$ satisfying conditions \eqref{eq conditions}. Moreover, it is quite clear from items \eqref{item pilkku}--\eqref{item p}, that if $A'_c((a,b),c^*)=0$ for some essential strategy $(a,b)$, then this essential pure strategy is also an optimal one. 
	\item From the proof we can deduce that if $c^*>\hat{c}$, then by Lemma \ref{lemma A} the both essential strategies needed for conditions \eqref{eq conditions} can be chosen to be of the simple hitting time form $(0,z)$.
\end{enumerate}
\end{remark}

Lastly, we use the verification theorem \ref{theorem Pedersen} to verify that the optimal solution of the sup-player for the game of Description \ref{definition sets} with restricted compact strategy sets is optimal also for the variance stopping problem \eqref{eq prob} among all randomized stopping times.

\begin{lemma}\label{lemma random}
Let assumptions of Theorem \ref{theo main game} hold and let $c^*$, $a^*$, $b^*$, $z_{c^*}$, and $p$ be as in Proposition \ref{prop game}. Then the randomized stopping time $\tau^*:=\xi_p\tau_{(0,z_{c^*})}+(1-\xi_{p})\tau_{(a^*,b^*)}$ is an optimal stopping time to the variance stopping problem \eqref{eq prob} among all admissible randomized stopping times $\mathcal{T}$.
\end{lemma}

\begin{proof}
 We will show that $\tau^*$ and $c^*$ satisfy the conditions from the verification theorem (Proposition \ref{theorem Pedersen}). Let $y^*= \xi_p(0,z^*)+(1-\xi_{p})(a^*,b^*)$. As $A'_c(y^*,c)=2(c-\E_x\left\{X_{\tau_{y^*}}\right\})$ and $\E_x\left\{X_{\tau_{y^*}}\right\}=\E_x\left\{X_{\tau^*}\right\}$, the condition \eqref{eq p condition} can be written as
\[E_x\left\{X_{\tau^*}\right\}=c^*.\]
As $y^*$ is a randomization between two essential strategies, we have
\begin{align*}
\E_x\{(X_{\tau^*}-c^*)^2\}&=A(y^*,c^*)=A((a^*,b^*),c^*)\\
&=\sup_{(a,b)\in[0,x]\times[x,B_x]}A((a,b),c^*)\\
&=\sup_{(a,b)\in[0,x]\times[x,\beta)}A((a,b),c^*)\\
&=\sup_{\tau\in\mathcal{T}}\E_x\{(X_{\tau}-c^*)^2\}
\end{align*}
where the second to last equation follows from the choice of $B_x$ and the last equation from the fact that for every $c$ there is an optimal stopping time is of the form $\tau_{(a,b)}$ where $(a,b)\in [0,x]\times [x,B_x]$ by Lemma \ref{lemma stop}. It follows that both conditions of Proposition \ref{theorem Pedersen} are satisfied, and the claim follows. 
\end{proof}


We are now ready to conclude our proof of our main Theorem \ref{theo main}.

\begin{proof}[Proof of Theorem \ref{theo main}] \noindent

\begin{itemize}
	\item[Theorem \ref{theo main}\eqref{theo main Gen}] 
	Lemma \ref{lemma random} proves the claim.
	\item[Theorem \ref{theo main}\eqref{theo main Res}] As was noticed in Remark \ref{remark game}, if $c^*>\hat{c}$ then the two needed essential strategies for the conditions in \eqref{eq conditions} are both of the form $(0,z)$. For this reason, let us now prove that this condition follows from additional Assumption \ref{assumption 2}(I). This is done by showing that $A((0,z_{\hat{c}}),\hat{c})<0$, which excludes the possibility that $\hat{c}$ would be optimal, as by Lemma \ref{lemma A} $A((0,z_{c^*}),c^*)\geq0$. 

Now, either $z_{\hat{c}}=x$ or $z_{\hat{c}}>x$. In the former case we have 
\[\E_x\left\{X_{\tau_{(0,z_{\hat{c}})}}\right\}=x=z_{\hat{c}}>2\hat{c}>\hat{c}\] meaning that $A'_c((0,z_{\hat{c}}),\hat{c})=2\left(\hat{c}-\E_x\left\{X_{(0,z_{\hat{c}})}\right\}\right)<0$.

So, assume $z_{\hat{c}}>x$. This is possible only if for $\hat{c}$ there exists more than one state that maximizes $\frac{z^2-2\hat{c}z}{S(z)}$. Let $z_i$ be the smallest of them. By the definition of $\hat{c}$, we can immediately deduce that $x\in[z_i,z_{\hat{c}})$.

But now, Assumption \ref{assumption 2}(I) gives $\E_{z_i}\left\{X_{\tau_{\hat{c}}}\right\}=\frac{z_{\hat{c}}}{S(z_{\hat{c}})}S(z_i)>\hat{c}$, so that by monotonicity of $S(x)$ also $\E_{x}\left\{X_{\tau_{z_{\hat{c}}}}\right\}=\frac{z_{\hat{c}}}{S(z_{\hat{c}})}S(x)>\hat{c}$ for all $x\in[z_i,z_{\hat{c}})$. As was demonstrated above, in our case $x\in[z_i,z_{\hat{c}})$ meaning that again $A'((0,z_{\hat{c}}),\hat{c})<0$, and the claim follows.

\item[Theorem \ref{theo main}\eqref{theo main Sim}]
By Theorem \ref{theo main game} and Proposition \ref{prop game}\eqref{item pilkku} for a unique optimal strategy $c^*$ played by the inf-player, there exists a corresponding essential strategy $(0,z_{c^*})$. Now we will show that if $\frac{S'(z)}{S(z)}z$ is non-decreasing, the essential strategy $(0,z_{c^*})$ satisfies the condition $A'_c((0,z_{c^*}),c^*)=0$, indicating it to be an optimal strategy for the sup-player. 
	
\begin{enumerate}[1.]
		\item Let $z_c$ be the greatest point that maximizes $\frac{z^2-2cz}{S(z)}$. It is clear from Lemma \ref{lemma stop} that there is at least one such maximizer. Now will show that under the assumed monotonicity and differentiable conditions, it is the only one. 
		                                                   
By straight derivation, the first order optimality condition for the ratio $\frac{z^2-2cz}{S(z)}$ is
\begin{align}\label{eq c cond}
\frac{d}{dz}\frac{z^2-2cz}{S(z)}=0\quad\Longleftrightarrow\quad \frac{z-c}{\frac{1}{2}z-c}=\frac{S'(z)}{S(z)}z.
\end{align}
As was noticed in the proof of Lemma \ref{lemma stop}, the maximum point of the ratio $\frac{z^2-2cz}{S(z)}$ is attained on $(2c,\beta)$. Moreover, it can be easily checked that since $c>0$, the ratio $\frac{z-c}{\frac{1}{2}z-c}$ is $z$-decreasing and positive for $z>2c$. As we assumed the positive mapping $\frac{S'(z)}{S(z)}z$ to be non-decreasing, we see that for any $c\in\I$ there is at most one $z_c>2c$ satisfying the first order optimality condition \eqref{eq c cond}. Consequently, for each $c$ there exists exactly one $z_c$ maximizing $\frac{z^2-2cz}{S(z)}$. From \eqref{eq c cond} we also see that $z_c$ must be $c$-continuous under the stated monotonicity condition.

\item Let again $\hat{c}=\inf\{c\mid z_c>x\}$. As in this case $z_c$ is unique, $c$-continuous and increasing, we must have 
\[\E_x\left\{X_\tau{(0,z_{\hat{c}})}\right\}=x=z_{\hat{c}}>2\hat{c}>\hat{c}.\]
Especially this means 
\[A'_c((0,z_{\hat{c}}),\hat{c})=2\left(\hat{c}-\E_x\left\{X_\tau{(0,z_{\hat{c}})}\right\}\right)<0.\]
Thus we cannot have $c^*=\hat{c}$, as by Proposition \ref{prop game}\eqref{item pilkku} $A'_c((0,z_{c^*}),c^*)\geq0$. It follows that $c^*\in(\hat{c},M_x)$ and hence by Lemma \ref{lemma A} we must have two essential strategies $\tau_{(0,z')}$ and $\tau_{(0,z'')}$ satisfying \eqref{eq A}. However, as the essential strategy $\tau_{(0,z_{c^*})}$ is unique under the stated assumptions, we can conclude that $z'\equiv z''$ and that $A'_c((0,z_{c^*}),c^*)=0$ proving the optimality of the pair $((0,z_{c^*}),c^*)$. 

\item To actually find the unique pair $(c^*,z_{c^*})$, let $(0,z)$ be given for some $z\in[x,B_x]$. Then the inf-player wants to minimize
\[A((0,z),c)=\E_x\left\{X_{\tau_{(0,z)}}^2\right\}-2c\E_x\left\{X_{\tau_{(0,z)}}\right\}+c^2\]
with respect to $c$. This is easily seen to happen at $c=\E_x\left\{X_{\tau_{(0,z)}}\right\}=\frac{S(x)}{S(z)}z$. Substituting this into the game, the sup-player is left to maximize 
\[\sup_{z}A\left((0,z),\tfrac{S(x)}{S(z)}z\right)=\sup_z \left\{\frac{z^2-2z^2\frac{S(x)}{S(z)}}{S(z)}S(x)+\frac{S(x)^2z^2}{S(z)^2}\right\}.\]
By straight differentiation we get the first order optimality condition
\begin{align}\label{eq optimal}
\frac{d}{dz}A\left((0,z),\tfrac{S(x)}{S(z)}z\right)=0 \quad \Longleftrightarrow\quad \frac{S(z)-S(x)}{\frac{1}{2}S(z)-S(x)}=\frac{S'(z)}{S(z)}z
\end{align}
It is easy to check that $\frac{S(z)-S(x)}{\frac{1}{2}S(z)-S(x)}$ is negative for all $z<S^{-1}\left(2S(x)\right)$ and $z$-decreasing and positive for all $z>S^{-1}\left(2S(x)\right)$. As $\frac{S'(z)}{S(z)}z$ is positive and assumed to be non-decreasing, we see that there is at most one solution to \eqref{eq optimal}. Consequently, the unique solution on $(S^{-1}\left(2S(x)\right),\beta)$ to \eqref{eq optimal}, is the optimal stopping boundary $z^*$, the stopping time $\tau_{(0,z^*)}$ is an optimal stopping time to the problem \eqref{eq prob} and the value reads as $A((0,z^*),\frac{S(x)}{S(z^*)}z^*)$. \qedhere
\end{enumerate}
\end{itemize}
\end{proof}

The game theoretic proof for the existence of a value (Proposition \ref{theo Karlin}) also offers a way to identify the solution(cf. Sections 2.11-2.12 in \cite{Vor77} and Chapter 4 in \cite{Karlin59}), and this is presented in Subsection \ref{subsec algo}. 

\subsection{On optimality}\label{subsec on optimality}

\subsubsection*{Time-inconsistency and static and dynamic optimalities}

The variance stopping problem \eqref{eq prob} depends on the current state $x$. Hence it is also expected that also the stopping boundary $z^*(x)$ is highly sensitive on the starting point $x$ (see e.g. \eqref{eq z}). This phenomenon can be named \emph{time-inconsistency}, meaning that when the process moves from the initial value $x$ in $t$ units of time to a new location $y$ it yields different value and stopping rule (cf. \cite{PedPes12,Christensen17,Christensen18}). In sharp contrast to this, in the usual linear optimal stopping problem, \emph{time-consistent} problem, the stopping boundary is unaltered by the path of the underlying process. This naturally raises some questions whether our starting point dependent optimality (called \emph{static optimality} in \cite{PedPes12}) is adequate for all practical purposes. This question has been asked in \cite{PedPes12} where also a new optimality class, \emph{dynamic optimality}, was introduced in a mean-variance setting. The distinction between these two optimalities is discussed in details in \cite[Section 4]{PedPes12}. 

In short, the statically optimal strategy depends on the initial point, and hence it remembers the past. In contrast, the dynamically optimal strategy is independent from the initial state, and hence it ignores the past and is only pointing to the future. It is acquired by basically solving infinitely many optimal stopping problems dynamically in time where each new position of $X_t$ yields a new optimal stopping problem. In practice, in one-boundary cases, the dynamic optimal stopping threshold in the mean-variance setting is the solution to the equality $z^*(x)=x$. For the static and dynamic optimality in a mean-variance portfolio selection problem, see \cite{PedPes13}. In a pure variance stopping problem, as the one considered in this paper, the whole received value when stopping the process comes from the realized path of "past" (i.e. realized variance), whereas there are no value for the "present". This means that in pure variance problem it is dynamically never optimal to stop, as the value without the "past" is always zero. For a concrete example, see geometric Brownian motion example in Subsection \ref{subsec gbm}.

In the mean-variance setting both optimalities --- static as well as dynamic --- has sound interpretations. As explained in \cite{PedPes12}, the static optimality can be seen a sound strategy for a "pre-committed" investor, who evaluate his strategy at initial position, and does not re-evaluate the optimality criterion at later times. The dynamic optimality is a sound system for a "dynamic investor", who is non-committed to the initial strategy and re-evaluates his optimality criterion at each new time point. Although the dynamic optimality is sensible in a mean-variance setting, it nevertheless is not meaningful when maximizing variance alone, where stopping immediately in any given starting point is always unprofitable. I.e. in a pure variance stopping problem, it cannot be dynamically optimal to stop at any time. This rational solution, as the variance is realized through evolved path, and this path is dismissed totally by the dynamic optimality which only looks ahead. Observe, however, that solving dynamic optimal stopping boundary in practice requires knowledge on static optimal stopping boundary as well. In this way, Theorem \ref{theo main} might offer some basis also for dynamic optimality in more complex problem settings.

Worth noticing is that the usual, time-consistent linear optimal stopping problems can be seen to be simultaneously both dynamical as well as static in their nature. Indeed, in those problems a decision maker can decide his stopping strategy based only on the starting point $x$. On the other hand, he arrives to a same strategy also by re-evaluating his strategy at each new time point as the underlying process develops. This underlines the differenceses between linear and non-linear optimal stopping problems.

\subsubsection*{Randomized stopping times and time-inconsistency}
Recently time-inconsistent stopping problems have gained growing attention. For example in \cite{PedPes12,PedPes13,Gad15,Gad15b, GadPed15,Christensen17,Christensen18} the time-inconsistency is attained by considering a value function that is non-linear with respect to the expectation. In \cite{Huang17, Huang18, Huang2018} it is shown that a time-inconsistency is the outcome when one considers a more complex discount function. For a short literature summary on time-inconsistency in financial economics, see \cite{Christensen17}. In very recent papers \cite{Christensen17, Christensen18} Christensen and Lindensj\"o consider general time-inconsistent optimal stopping problems in game-theoretic framework. In their setup, they formulate the stopping problem as a game, which is played between uncountable amount of agents, one for each possible starting point of the underlying process. The solution to this game is then found as a possibly randomized equilibrium stopping time. This equilibrium differs slightly in nature from static and dynamic optimality. For example in \cite{Christensen18} it is shown how the equilibrium solution in geometric Brownian motion case differs from static and dynamic optimality in variance stopping problem and in mean-variance setting it coincides with dynamic optimality, but only with certain parameters. Applying this approach, Christensen and Lindesj\"o characterize the equilibrium of the problem and find necessary and sufficient conditions for it. Especially it is shown in these papers, and also for example in \cite{GadPed15}, how the solution in the general setting in the time-inconsistent problems cannot be reached without utilizing the randomized stopping time (or mixed strategy) -concept. As usual time-consistent stopping problems always involve only pure optimal stopping rules (see e.g. \cite{PesShi06,Shi78}), it follows that inconsistency of time is the needed trigger for randomized solutions to appear. It should be mentioned, however, that in the linear optimal stopping problems the decision maker can be indifferent about optimal stopping time; i.e. two different stopping times might both yield the same value. Then also a mixture of these two stopping times would give the value, but the difference to the time-inconsistent case is that this kind of randomization do not carry any additional value. 

The natural question is: why time-inconsistency offers a possibility to a randomized solution? One possible answer to this might be the additional demand posed for an optimal solution. To make this argument more clear, let us consider the verification theorem presented for variance problem (Proposition \ref{theorem Pedersen}). In it, we see how the stopping time is optimal only if it satisfies a requirement $\E_x\left\{X_{\tau^*}\right\}=c^*$ in addition to the standard optimality demand $V^c(x)=\E_x\left\{(X_{\tau^*}-c^*)^2\right\}$. It can be that this additional expectation requirement is simply "too much" to acquire with pure stopping time strategies, and hence enlarged class of stopping times is needed to solve the problem in general. 

However, it could be that the reverse question ought to be the right one: Why linear problem setting kills the need for a randomization? It can be argued that very generally seen, randomized stopping times are always the right class to consider, but time-homogenous environment adds smoothness, linearity, and some form of simplicity, which lead to pure optimal strategies. From this view, the linear "usual optimal stopping problems" are in fact the unusual ones.

\section{Proofs of Cases (II) and (III)}\label{sec cases II & III}

\subsection{Proof of Theorem \ref{theo mainII} --- Case (II)}
The proof of this theorem can be returned to Case (I) in the following way. Let $S(x)$ be the scale function and $m(x)$ the speed measure associated to $X_t$. 
If $\beta\neq0$, we can define an auxiliary process $Y_t$ on a state space $(\alpha-\beta,0)$ by defining the scale function $\hat{S}$, speed measure $\hat{m}$, and starting point $Y_0$ through
\begin{align}\label{eq trans2}
\hat{S}(y):=S(y+\beta),\quad \hat{m}(y):=m(y+\beta),\quad Y_0=y:=x-\beta.
\end{align}
Then $Y$ is well defined diffusion on $(\alpha-\beta,0)$ inheriting its boundary behaviour from $X_t$.

After this we can define another auxiliary process $Z_t$ on a state space $(0,\beta-\alpha)$ by defining the scale function $\check{S}$, speed measure $\check{m}$, and starting point $Z_0$ through
\begin{align}\label{eq trans3}
\check{S}(z):=-\hat{S}(-z)+\hat{S}(0),\quad \check{m}(z):=m(-z),\quad Z_0=z:=-y.
\end{align}
Then $Z$ is well defined diffusion on $(0,\beta-\alpha)$, with lower end point inheriting its behaviour from the upper end point of $Y$ and vice versa, and its scale function vanishing at the lower boundary: $\check{S}(0)=0$. It follows that 
the optimal variance stopping problem
\begin{align*}
\check{V}(z)=\sup_\tau\Var_z\left\{Z_\tau\right\},\quad Z_0=z
\end{align*}
can be solved utilizing Theorem \ref{theo main}. Consequently, the desired result concerning the diffusion $X_t$ on $(\alpha,\beta)$ can be retrieved from this by inverting the transformations in \eqref{eq trans2} and \eqref{eq trans3}.

\subsection{Proof of Theorem \ref{theo mainIII} --- Case (III)}

We need to consider only the case $\alpha=0$. If $\alpha\neq0$, we can make the same transformation we did in Case (I) to retrieve the case $\alpha=0$.

The main difference to the other cases is the fact that the type of the optimal solution depends on the location of the starting point $x$. This phenomenon arises when solving the embedded quadratic problem \eqref{eq associated}: The type of the solution is different depending on whether $c\leq \frac{1}{2}\beta$ or not, as we prove in the following lemma.

\begin{lemma}\label{lemma stop2}
Let $z_c$ be the greatest point on $[0,\beta]$ that maximizes $\frac{z^2-2cz}{S(z)}$, and $y_c$ the smallest point on $[0,\beta]$ that maximizes $\frac{y^2-2cy-\beta^2+2c\beta}{S(\beta)-S(y)}$. 
\begin{enumerate}[(A)]
	\item Assume that $c\leq \frac{1}{2}\beta$. Then $y_c=\beta$ and $z_c\in(2c,\beta]$ and, for all $x<z_c$, the optimal stopping time to the embedded quadratic problem \eqref{eq associated} is $\tau_{(0,z_c)}$ and the value reads as
\[V^c(x)=\frac{{z_c}^2-2cz_c}{S(z_c)}S(x)+c^2.\]
\item Assume that $c>\frac{1}{2}\beta$. Then $y_c\in[0,2c-\beta)$ and $z_c=0$ and, for all $x>y_c$, the optimal stopping time to the embedded quadratic problem \eqref{eq associated} is $\tau_{(y_c,\beta)}$ and the value reads as
\[V^c(x)=\frac{{y_c}^2-2cy_c-\beta^2+2c\beta}{S(\beta)-S(y_c)}S(x)+(\beta-c)^2.\]
\end{enumerate}
\end{lemma}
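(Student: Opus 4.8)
The plan is to mirror the proof of Lemma \ref{lemma stop}, the only genuinely new feature being that $\beta$ is now attractive with $S(\beta)<\infty$, so that $X_\infty\in\{0,\beta\}$ and the boundary value $(\beta-c)^2$ enters with positive weight. This forces two competing families of candidate exit times — the \emph{lower} times $\tau_{(0,z)}$, absorbed at $0$ or exiting at $z$, and the \emph{upper} times $\tau_{(y,\beta)}$, exiting at $y$ or absorbed at $\beta$ — and the threshold $c=\tfrac12\beta$ decides which family is relevant. I would first treat the regime $c\le\tfrac12\beta$ (part (A)) in full and then obtain part (B) from the reflection $x\mapsto\beta-x$.

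\emph{Step 1 (candidate values).} Using the scale-function hitting law \eqref{eq hit} together with $S(0)=0$, for $x<z\le\beta$ the time $\tau_{(0,z)}$ lands on $z$ (or drifts to $\beta$ when $z=\beta$) with probability $S(x)/S(z)$ and on $0$ otherwise, so
\begin{align*}
\E_x\left\{(X_{\tau_{(0,z)}}-c)^2\right\}
=\frac{S(x)}{S(z)}(z-c)^2+\left(1-\frac{S(x)}{S(z)}\right)c^2
=\frac{z^2-2cz}{S(z)}S(x)+c^2.
\end{align*}
Likewise, for $y<x$ the time $\tau_{(y,\beta)}$ lands on $y$ with probability $(S(\beta)-S(x))/(S(\beta)-S(y))$ and on $\beta$ otherwise, which rearranges to $(\beta-c)^2$ plus $\bigl(S(\beta)-S(x)\bigr)$ times $\frac{y^2-2cy-\beta^2+2c\beta}{S(\beta)-S(y)}$. \emph{Step 2 (reduction to the ratios).} Since $S(x)>0$ and $S(\beta)-S(x)>0$ are constants in $z$ and $y$, maximizing these expressions amounts to maximizing the two ratios in the statement, whose maximizers are $z_c$ and $y_c$; a sign analysis of the numerators $z(z-2c)$ and $(y-c)^2-(\beta-c)^2$ pins down their location. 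For $c\le\tfrac12\beta$ one has $2c\le\beta$, the first numerator is positive precisely on $(2c,\beta]$, so the maximal ratio is strictly positive and $z_c\in(2c,\beta]$, while the second numerator is $\le0$ on all of $[0,\beta]$, so the upper family is never advantageous, recorded by $y_c=\beta$.

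\emph{Step 3 (optimality --- the crux).} I would show $\tau_{(0,z_c)}$ is optimal \emph{among all} stopping times, not merely exit times, exactly as in Lemma \ref{lemma stop}. On $\{X_\tau\neq0\}$ write
\begin{align*}
(X_\tau-c)^2=\frac{X_\tau^2-2cX_\tau}{S(X_\tau)}\,S(X_\tau)+c^2
\le\frac{z_c^2-2cz_c}{S(z_c)}\,S(X_\tau)+c^2,
\end{align*}
the inequality being the defining maximality of $z_c$ over the closed interval $[0,\beta]$ containing the range of $X_\tau$; the contribution of $\{X_\tau=0\}$ reduces to the additive constant $c^2$. Taking expectations and using that $S(X_t)$ is a nonnegative local martingale, hence a supermartingale, so $\E_x\{S(X_\tau)\}\le S(x)$, gives $\E_x\{(X_\tau-c)^2\}\le\frac{z_c^2-2cz_c}{S(z_c)}S(x)+c^2$, the value attained by $\tau_{(0,z_c)}$. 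The coefficient $\frac{z_c^2-2cz_c}{S(z_c)}$ is nonnegative precisely because $z_c>2c$, which is exactly where $c\le\tfrac12\beta$ is used to keep the inequality direction correct. The main obstacle here is the bookkeeping at the two attractive boundaries: one must check that $X_\tau=\beta$ introduces no pathology — it does not, because $z_c$ maximizes over the closed interval and $S(\beta)<\infty$ — and that the $\{X_\tau=0\}$ piece is isolated correctly, just as in Lemma \ref{lemma stop}.

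\emph{Step 4 (part (B) by symmetry).} For $c>\tfrac12\beta$ I would apply the reflection $\tilde X_t:=\beta-X_t$, a diffusion on $(0,\beta)$ with scale $\tilde S(u)=S(\beta)-S(\beta-u)$, $\tilde S(0)=0$, whose endpoints swap roles. Since $(X_\tau-c)^2=(\tilde X_\tau-(\beta-c))^2$ and $\beta-c<\tfrac12\beta$, the embedded problem for $X$ at $c$ is exactly the already-settled $c\le\tfrac12\beta$ case for $\tilde X$ at parameter $\beta-c$. Translating back, the optimal $\tau_{(0,z_{\beta-c})}$ for $\tilde X$ becomes $\tau_{(y_c,\beta)}$ for $X$ with $y_c=\beta-z_{\beta-c}\in[0,2c-\beta)$, the value acquires the stated form (with weight $S(\beta)-S(x)=\tilde S(\beta-x)$ and constant $(\beta-c)^2$), and the degeneracy $y_{\beta-c}=\beta$ from part (A) becomes $z_c=0$ here. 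This reflection simultaneously supplies the supermartingale $S(\beta)-S(X_t)$ underlying the symmetric optimality bound, so no separate argument for part (B) is needed.
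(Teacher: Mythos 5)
Your proposal is correct, and for part (A) it is essentially the paper's own argument: the same sign analysis of the numerators $z(z-2c)$ and $(y-c)^2-(\beta-c)^2$ locates $z_c\in(2c,\beta]$ and forces $y_c=\beta$, and the same decomposition $(X_\tau-c)^2=\frac{X_\tau^2-2cX_\tau}{S(X_\tau)}S(X_\tau)+c^2$ combined with the nonnegative-local-martingale (hence supermartingale) bound $\E_x\left\{S(X_\tau)\right\}\le S(x)$ yields optimality of $\tau_{(0,z_c)}$ among all stopping times --- the paper compresses this into the remark that the step is ``analogous to Lemma \ref{lemma stop}''. Where you genuinely diverge is part (B): the paper proves it directly, rerunning the argument with the ratio $\frac{y^2-2cy-\beta^2+2c\beta}{S(\beta)-S(y)}$ and exploiting that in Case (III) $S(X_t)$ is a \emph{bounded} local martingale on $\I$, hence a true martingale, so that $\E_x\left\{S(\beta)-S(X_\tau)\right\}=S(\beta)-S(x)$ holds with equality; you instead deduce (B) from (A) via the reflection $\tilde{X}_t=\beta-X_t$ with scale $\tilde{S}(u)=S(\beta)-S(\beta-u)$. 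Your reduction is sound: the algebra checks out, since $(\beta-y)(2c-\beta-y)=y^2-2cy-\beta^2+2c\beta$, so under $u=\beta-y$ the two ratios correspond exactly, the greatest maximizer for $\tilde{X}$ maps to the smallest maximizer $y_c=\beta-\tilde{z}_{\beta-c}\in[0,2c-\beta)$ for $X$, and $\beta-c<\tfrac{1}{2}\beta$ is precisely the hypothesis of (A). Moreover, this reflection is the very device the paper uses elsewhere to reduce Case (II) to Case (I) (transformations \eqref{eq trans2}--\eqref{eq trans3}), so your route unifies the two halves of the lemma at the cost of not recording the slightly stronger martingale equality that the paper's direct computation makes visible; your version only ever needs the supermartingale inequality for the nonnegative local martingale $S(\beta)-S(X_t)$, which suffices because the maximal ratio coefficient is nonnegative for $c>\tfrac{1}{2}\beta$. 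One further point in your favour: your derivation produces the value with weight $S(\beta)-S(x)$, which agrees with the final display of the paper's own proof of part (B); the factor $S(x)$ printed in the lemma statement is evidently a typo.
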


\begin{proof}
\begin{enumerate}[(A)]
	\item It is easily seen that when $c\leq \frac{1}{2}\beta$, then $\frac{z^2-2cz}{S(z)}$ is positive on $(2c,\beta]$ and negative elsewhere in the state space. On the other hand $\frac{y^2-2cy-\beta^2+2c\beta}{S(\beta)-S(y)}$ is negative on $[0,\beta)$. Therefore $y_c=\beta$ (the value with a stopping rule $\tau_{(y,\beta)}$ is maximized when $y=\beta$) and $z_c\in(2c,\beta]$. The proof for the solution for all $x<z_c$ is analogous to Lemma \ref{lemma stop}.
	\item It is easily seen that when $c>\frac{1}{2}\beta$, then $\frac{z^2-2cz}{S(z)}$ is negative on $(0,\beta]$. On the other hand $\frac{y^2-2cy-\beta^2+2c\beta}{S(\beta)-S(y)}$ is positive on $[0,2c-\beta)$ and negative elsewhere in the state space. Therefore $y_c\in[0,2c-\beta)$ and $z_c=0$. 
	
	Let $x>y_c$. Then for all stopping times $\tau$ we have
	\begin{align*}
	\E_x\left\{(X_\tau-c)^2\right\}&=\E_x\left\{\frac{X_\tau^2-2cX_\tau-\beta^2+2c\beta}{S(\beta)-S(X_\tau)}\left(S(\beta)-S(X_\tau)\right)\right\} +(\beta-c)^2\\
	&\leq \frac{y^2-2cy-\beta^2+2c\beta}{S(\beta)-S(y)}\E_x\left\{S(\beta)-S(X_\tau)\right\}+(\beta-c)^2\\
	&=\frac{y^2-2cy-\beta^2+2c\beta}{S(\beta)-S(y)}\left(S(\beta)-S(x)\right)+(\beta-c)^2,
	\end{align*}
where the first inequality follows by the maximality of $y_c$ and the second one follows from the fact that $S(X_t)$ is a bounded local martingale on $\I$ and hence martingale.

As this value is attained with $\tau_{(y_c,\beta)}$ we know that it must be the optimal stopping time.\qedhere
\end{enumerate}
\end{proof}

 The rest of the proof is analogous to Case (I) and Case (II). One only needs to do separately the cases $\E_x\left\{X_{\tau_{(0,\beta)}}\right\}\leq \frac{1}{2}\beta$ and $\E_x\left\{X_{\tau_{(0,\beta)}}\right\}> \frac{1}{2}\beta$.

\section{Special cases}\label{sec special case}
For the sake of completeness let us study here briefly the special cases which are not yet covered.

\subsection{Recurrent case}

In the recurrent case we have $-S(\alpha)=S(\beta)=\infty$.

\begin{lemma}\label{lemma recurrent}
Let $X_t$ be recurrent. 
\begin{enumerate}[(A)]
	\item Assume that $\alpha=-\infty$ or $\beta=\infty$. Then $V(x)\equiv\infty$.
	\item Assume that $-\infty<\alpha<\beta<\infty$. Then the optimal stopping time is $\tau_{(\alpha,\beta)}$ and the value reads $V(x)=\frac{1}{4}(\beta-\alpha)^2$.
\end{enumerate}
\end{lemma}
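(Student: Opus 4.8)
The plan is to handle the two parts separately, with part (A) essentially free from the machinery already developed and part (B) reducing to two matching bounds plus one interpretive subtlety.

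For part (A) I would simply invoke Proposition \ref{prop inf}. By the very definition of recurrence, $\p_x\left(\tau_y<\infty\right)=1$ for all $x,y\in\I$. Hence if $\beta=\infty$ then $\lim_{b\to\infty}\p_x\left(\tau_b<\infty\right)b^2=\lim_{b\to\infty}b^2=\infty$, which is exactly condition (i) of Proposition \ref{prop inf}, so $V(x)=\infty$; the symmetric computation with condition (ii) covers the case $\alpha=-\infty$. Thus part (A) is immediate and requires no new work.

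For part (B) the state space is bounded, so every admissible $\tau$ for which $X_\tau$ is defined satisfies $X_\tau\in[\alpha,\beta]$ almost surely. I would first establish the upper bound $V(x)\le\tfrac14(\beta-\alpha)^2$, which is just Popoviciu's inequality: for any such $\tau\in\mathcal{T}$,
\[
\Var_x\{X_\tau\}=\inf_c\E_x\left\{(X_\tau-c)^2\right\}\le\E_x\left\{\left(X_\tau-\tfrac{\alpha+\beta}{2}\right)^2\right\}\le\tfrac14(\beta-\alpha)^2 ,
\]
the last step because $\left|X_\tau-\tfrac{\alpha+\beta}{2}\right|\le\tfrac{\beta-\alpha}{2}$ almost surely. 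Note that this bound uses neither recurrence nor the particular form of $\tau$. For the matching lower bound I would exploit the exit-time variance formula \eqref{eq formula} together with the recurrence facts $S(\alpha)=-\infty$ and $S(\beta)=\infty$. Pick any sequence $b_n\uparrow\beta$, so that $S(b_n)\to\infty$, and define $a_n\in(\alpha,x)$ by $S(a_n)=2S(x)-S(b_n)$; such $a_n$ exists and satisfies $a_n\downarrow\alpha$ because $S$ is continuous, strictly increasing, and $S(\alpha)=-\infty$. Each $\tau_{(a_n,b_n)}$ is finite almost surely by Lemma \ref{lemma hit}, and with this balanced choice \eqref{eq hit} gives $\p_x\left(\tau_{a_n}<\tau_{b_n}\right)=\p_x\left(\tau_{b_n}<\tau_{a_n}\right)=\tfrac12$, whence by \eqref{eq formula}
\[
\Var_x\left\{X_{\tau_{(a_n,b_n)}}\right\}=\tfrac14(b_n-a_n)^2\longrightarrow\tfrac14(\beta-\alpha)^2 .
\]
Therefore $V(x)\ge\tfrac14(\beta-\alpha)^2$, and combined with the upper bound this yields $V(x)=\tfrac14(\beta-\alpha)^2$.

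The remaining, and only genuinely delicate, point is to justify calling $\tau_{(\alpha,\beta)}$ optimal. Since $S(\alpha)=-\infty$ and $S(\beta)=\infty$, both boundaries are natural and are not reached in finite time, so $\tau_{(\alpha,\beta)}=\lim_{a\to\alpha,\,b\to\beta}\tau_{(a,b)}=\infty$ almost surely and $X_\infty$ is undefined pointwise for a recurrent path. I would therefore read optimality through the approximating family above: $X_{\tau_{(a_n,b_n)}}$ converges in law to $\tfrac12\delta_\alpha+\tfrac12\delta_\beta$, whose variance is exactly $\tfrac14(\beta-\alpha)^2$, so the supremum is attained by $\tau_{(\alpha,\beta)}$ in this limiting sense. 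Reconciling the notation $\tau_{(\alpha,\beta)}$ with the convention on $X_\infty$ is the main thing to handle with care; the two bounds themselves are routine.
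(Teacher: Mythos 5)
Your proposal is correct and follows essentially the same route as the paper: part (A) via Proposition \ref{prop inf} using $\p_x(\tau_b<\infty)=1$, and part (B) via the elementary bound $\tfrac14(\beta-\alpha)^2$ for the variance of a random variable on $[\alpha,\beta]$ together with balanced exit times chosen through the scale function ($S(a_n)=2S(x)-S(b_n)$ is the same construction as the paper's normalization $S(x)=0$, $S(b_n)=-S(a_n)$). Your closing caveat about $\tau_{(\alpha,\beta)}$ being infinite almost surely and optimality holding only in the limiting sense matches the remark the paper makes immediately after its proof.
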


\begin{proof}
\begin{enumerate}[(A)]
	\item Let $\beta=\infty$. As $X_t$ is recurrent, we have $\p_x\left(\tau_b<\infty\right)=1$ for all $b>x$, and hence the claim follows straightly from Proposition \ref{prop inf}. The case $\alpha=-\infty$ is analogous.
	\item 
First note that an arbitrary random variable $Y$ on an interval $[\alpha,\beta]$ has the highest possible variance if $\PP(Y=\alpha)=\PP(Y=\beta)=\tfrac{1}{2}$. In this case $\Var(Y)=\tfrac{1}{4}(\alpha-\beta)^2$. Since $X_{\tau}$ takes values on $(\alpha,\beta)$ we must have $V(x)\leq \tfrac{1}{4}(\alpha-\beta)^2$. Let us next show that also the reversed inequality holds.

As $X$ is recurrent we can choose sequences $a_n$ and $b_n$ in such a way that $a_n\rightarrow \alpha$ and $b_n\rightarrow \beta$ as $n\rightarrow \infty$, and that $\p_x(X_{\tau_{(a_n,b_n)}}=a_n)=\p_x(X_{\tau_{(a_n,b_n)}}=b_n)=\tfrac{1}{2}$ for all $n\in\N$. 
To show that these sequences exist, let $x$ be, for simplicity, such that $S(x)=0$. Choose $a_n<x$ to be any decreasing sequence for which $\lim_{n\to\infty}a_n=\alpha$ and choose $b_n$ to satisfy $S(b_n)=-S(a_n)$. Then the sequences $a_n$ and $b_n$ satisfy required properties since
\begin{align*}
\p_x\left(\tau_{b_n}<\tau_{a_n}\right)=\frac{-S(a_n)}{S(b_n)-S(a_n)}=\frac{1}{\frac{S(b_n)}{-S(a_n)}+1}=\frac{1}{2}\quad \text{for all }n\in\N.
\end{align*}

Thus $\Var_x(X_{\tau_{(a_n,b_n)}})=\tfrac{1}{4}(a_n-b_n)^2\rightarrow \tfrac{1}{4}(\alpha-\beta)^2$ as $n\rightarrow \infty$. Since we have $V(x)\geq \Var_x(X_{\tau_{(a_n,b_n)}})$ for all $n\in\N$, we must also have $V(x)\geq \tfrac{1}{4}(\alpha-\beta)^2$ proving the claim. \qedhere
	\end{enumerate}
\end{proof}
The result states intuitively clear fact of how, in recurrent case, we should use the whole span of the state space. Notice that the optimal stopping time $\tau_{(\alpha,\beta)}$ is infinite almost surely. However, in the proof we saw that for every $\varepsilon>0$ there exists $a$ and $b$ such that $\tau_{(a,b)}$ is an almost surely finite $\varepsilon$-stopping time.

\subsection{Transient case with $\lim_{b\to\infty}\p_x\left(\tau_b<\infty\right)b^2\in(0,\infty)$}

Let us consider briefly Case (I) of Assumption \ref{assumption} with the condition that $\beta=\infty$ and $\lim_{b\to\infty}\p_x\left(\tau_b<\infty\right)b^2=\lim_{b\to\infty}\frac{b^2}{S(b)}\in(0,\infty)$.

It can be shown, mimicking the proof of Lemma \ref{lemma stop}, that also in this special case the optimal stopping time to an embedded quadratic problem
\begin{align*}
V^c(x)=\sup_\tau\E_x\left\{(X_\tau-c)^2\right\}
\end{align*}
is $\tau_{(\alpha,z_c)}$ for all $c\in\I$ and $x<z_c$. However, the main difference is that now we may have $z_c=\infty$, and $\tau_{(\alpha,\infty)}$ is unattainable in finite time almost surely. Nevertheless, we can write $z_c=\argmax\left\{\frac{z^2-2z c}{S(z)}\right\}$, and the value $V^c(x)=CS(x)+c^2<\infty$, where $C=\sup_z\left\{\frac{z^2-2z c}{S(z)}\right\}$. Unfortunately, general existence proofs of Theorem \ref{theo main} cannot be utilized straightforwardly as they require $z_c$ to be finite. However, applying the $c$-convexity and the known fact that $\varepsilon$-optimal strategies do exist in a game setting, we could modify our Theorem \ref{theo main game} to work also in this case. Moreover, the proof when $\frac{S'(x)}{S(x)}x$ is non-decreasing, Theorem \ref{theo main}\eqref{theo main Sim} can be quite straightforwardly modified to work also in this case. Summarizing, the following result holds.

\begin{lemma}\label{lemma special}
Let $X_t$ be as in Agreement \ref{def X} on $\I=(\alpha,\infty)$. Let Assumption \ref{assumption}(I) hold. We fix $x\in\I$ and assume that $\lim_{b\to\infty}\p_x\left(\tau_b<\infty\right)b^2=\lim_{b\to\infty}\frac{b^2}{S(b)}\in(0,\infty)$.
\begin{enumerate}[(A)]
\item The value exists and there exists $c^*>0$ such that $V(x)=\sup_\tau\E_x\left\{(X_\tau-c^*)^2\right\}$. Moreover, there exists an $\varepsilon$-optimal stopping time of the form $\tau_{(0,z)}$, or possible a randomization between stopping times $\tau_{(\alpha,z)}$ and $\tau_{(a,b)}$ for some $a,b,z\in[\alpha,\infty]$.
	\item If $S$ is differentiable and $\frac{S'(z)}{S(z)}(z-\alpha)$ is non-decreasing, then the value exists and the optimal stopping time to the optimal variance stopping problem \eqref{eq prob} is $\tau_{(\alpha,z^*)}$,	where $z^*$ is either the unique solution to 
	\begin{align*}
	\frac{S(z)-S(x)}{\frac{1}{2}S(z)-S(x)}=\frac{S'(z)}{S(z)}(z-\alpha),
	\end{align*}
	or, if the root does not exist, $z^*=\infty$.	Furthermore, the value reads as $V(x)=(z^*-\alpha)^2\frac{S(x)}{S(z^*)}\left(1-\frac{S(x)}{S(z^*)}\right)$.\label{lemma hard A}
	\end{enumerate}
 \end{lemma}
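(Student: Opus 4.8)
The plan is to prove the two parts by adapting, respectively, the game-theoretic argument behind Theorem \ref{theo value} and the direct computation behind Theorem \ref{theo main}\eqref{theo main A}; after the usual reduction \eqref{eq trans} I take $\alpha=0$. The only genuinely new feature is that the maximizer $z_c$ of $\tfrac{z^2-2cz}{S(z)}$, i.e.\ the optimal exit boundary, is now allowed to run off to $\beta=\infty$, and I write $L:=\lim_{z\to\infty}\tfrac{z^2}{S(z)}\in(0,\infty)$.

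For part (A) I would first record the embedded-problem facts indicated before the lemma: mimicking Lemma \ref{lemma stop} gives $V^c(x)=C(c)S(x)+c^2$ with $C(c)=\sup_z\tfrac{z^2-2cz}{S(z)}$, and $C(c)<\infty$ precisely because $\tfrac{z^2-2cz}{S(z)}\to L$ keeps the ratio bounded. Next I restrict the inf-player exactly as in Lemma \ref{lemma sets}(A): choosing $c$ outside a compact interval $[\hat c_x,M_x]$ is non-rational, and Assumption \ref{assumption 2}\eqref{as2 Case I} is what rules out $c<\hat c_x$. The decisive modification is in applying Proposition \ref{theo Karlin}: I compactify the sup-player's boundary set to $[x,\infty]$ and use the payoff $A(\tau_{(0,z)},c)=\tfrac{z^2-2cz}{S(z)}S(x)+c^2$, which extends continuously to the adjoined point $z=\infty$ with value $LS(x)+c^2$ (the $c$-dependent term dies there because $\tfrac{z}{S(z)}\to0$). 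On $[x,\infty]\times[\hat c_x,M_x]$ the payoff is then continuous and strictly convex in $c$, its second $c$-derivative being $2$, so Proposition \ref{theo Karlin} delivers a value together with a pure optimal $c^*$ and a sup-optimal mixture of at most two boundaries. Chaining the inequalities exactly as in the proof of Theorem \ref{theo value} identifies this common value with both $\sup_\tau\inf_cA(\tau,c)=V(x)$ and $\inf_c\sup_\tau A(\tau,c)=V^{c^*}(x)$, whence $V(x)=\sup_\tau\E_x\{(X_\tau-c^*)^2\}$.

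The step that forces the weaker $\varepsilon$-conclusion, and the main obstacle of part (A), is translating the abstract optimizer back into honest stopping times. If a sup-optimal boundary is the adjoined point $z=\infty$, then $LS(x)+(c^*)^2$ is only the limit of $A(\tau_{(0,z)},c^*)$ as $z\to\infty$; the actual Markov time $\tau_{(\alpha,\infty)}$ sends $X$ to $\alpha$ almost surely and realises merely $(c^*)^2$. Hence one cannot assert an optimal stopping time, only that finite boundaries --- a single $\tau_{(0,z)}$, or a Bernoulli mixture of two such times --- approximate the value to within any $\varepsilon>0$, which is exactly the assertion in (A).

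For part (B) I would run the proof of Theorem \ref{theo main}\eqref{theo main A} with the endpoint allowed to be infinite. Writing $v(x,z)=\Var_x\{X_{\tau_{(0,z)}}\}=z^2\tfrac{S(x)}{S(z)}\bigl(1-\tfrac{S(x)}{S(z)}\bigr)$, its first-order condition is again \eqref{eq optimal}. The left-hand side of \eqref{eq optimal} is positive and strictly decreasing on $(S^{-1}(2S(x)),\infty)$ (tending to the limit $2$), while the monotonicity hypothesis keeps the right-hand side non-decreasing, so the two curves meet at most once; since $v(x,\cdot)$ vanishes at $x$ and is increasing just above it, this makes $v(x,\cdot)$ unimodal. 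If the curves cross at some finite $z^*$, that point is the unique maximizer, $\tau_{(\alpha,z^*)}$ is genuinely optimal, and the value is $v(x,z^*)$; if they never cross, $v(x,\cdot)$ increases throughout with $v(x,z)\to LS(x)$, so I set $z^*=\infty$, read the stated value formula in the limit, and note --- as in part (A) --- that here the supremum is only approached and $\tau_{(\alpha,\infty)}$ is optimal solely in the limiting sense.
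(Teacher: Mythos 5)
Your proposal is correct and follows essentially the same route as the paper, which itself only sketches this lemma in the surrounding text: part (A) by modifying the game-theoretic Theorem \ref{theo value} via $c$-convexity and $\varepsilon$-optimal strategies, and part (B) by rerunning the monotone argument of Theorem \ref{theo main}\eqref{theo main A} with the boundary allowed to escape to infinity. Your compactification of the sup-player's strategy set to $[x,\infty]$ (legitimate since $\tfrac{z}{S(z)}\to 0$ makes the payoff extend continuously, and values at the adjoined point are limits of values at finite thresholds, so the minimax chain still closes) is a faithful and slightly more explicit implementation of the paper's remark that only $\varepsilon$-optimal honest stopping times can be asserted when $z_c=\infty$.
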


	In item \eqref{lemma hard A} whenever $z^*=\infty$, the value $V(x)$ is understood as a limit $V(x)=\lim_{z\to\infty}(z^*-\alpha)^2\frac{S(x)}{S(z^*)}\left(1-\frac{S(x)}{S(z^*)}\right)$.

\subsection{Diffusions with a constant killing}\label{subsec kill}

In this paper we have only considered unkilled diffusions. In this subsection we discuss a bit about a process with a constant killing. The main reason the killing makes the problem more difficult is because it introduces two-sided optimal stopping rules to the embedded auxiliary problem \eqref{eq associated}.

\subsubsection*{Problem setting}\label{subsub kill 1}
Consider a unkilled diffusion $X_t$ on $\I=(\alpha,\beta)$, associated with scale function $S(x)$ and speed measure $m(x)$. To fix ideas, let the boundaries be natural. Fix $x\in\I$. Let $\tilde{X}_t$ be otherwise similarly defined process, but with a constant killing rate $\lambda>0$. In practice, we can then interpret $\tilde{X}_t$ to be killed at an exponential rate, i.e.
\[\tilde{X}_t=\begin{cases}
X_t,&t<\zeta\\
\partial, & t\geq \zeta,
\end{cases}
\]
	where $\zeta\sim\text{Exp}(\lambda)$ is the life time of the diffusion $\tilde{X}_t$ and $\partial\notin\I$ a cemetery state, where $\tilde{X}_t$ is sent immediately when killed. (See for example Section X.4 in \cite{Dynkin65} or Chapter III in \cite{BluGet68}.)

Now, to consider our optimal variance stopping problem with killed process, we need to separate two cases: In the first the terminal time $\zeta$ is not observable and hence cannot be used as a stopping time. In the second the terminal time $\zeta$ is observable and it can be used as a stopping time giving a value corresponding to $X_{\zeta-}$. We can, similar to Section \ref{sec proof}, modify the original problem into a zero-sum game. In the first one, where $\zeta$ is not observable, we can make the following modifications:
\begin{align*}
V_1(x)=\sup_\tau\text{Var}_x(\tilde{X}_\tau)&
=\sup_\tau\inf_c\E_x\left\{e^{-\lambda\tau}({X}_\tau-c)^2\right\}=:\sup_\tau\inf_cA_1(\tau,c;x),
\end{align*}
where $\tau$ is a $\mathbbm{F}$-stopping time and $c\in\I$. In the second one, where $\zeta$ is observable, the game turns out to be
\begin{align*}
V_2(x)&=\sup_\tau\text{Var}_x(\tilde{X}_\tau)\\
&=\sup_\tau\inf_c \left\{\lambda\left(R_\lambda(x-c)^2\right)(x)+\E_x\left\{e^{-\lambda\tau}(X_\tau-c)^2-\lambda e^{-\lambda\tau}\left(R_\lambda(x-c)^2\right)(X_\tau)\right\}\right\},
\end{align*}
where $\left(R_\lambda(x-c)^2\right)(x)=\E_x\left\{\int_0^\infty e^{-\lambda t}(X_t-c)^2dt\right\}$ is the resolvent for the mapping $(x-c)^2$

Here $V_1$ corresponds to a case, where the decision maker receives nothing if the process dies before he had taken action. That is, for a killed process the variation is zero. On the other hand $V_2$ corresponds to a case, where the decision maker receives the total variation of the process during its lifetime if he has not taken action before the process dies. That is, for the killed process the variation is its whole lifespan. 

\subsubsection*{Variance stopping}

Let us consider from now on only the problem $V_1$. Consideration for the problem $V_2$ is analogous but more technical. The problem type is different from unkilled version in that now the killing, or "discounting", makes it easier to reach finite value. To get a finite value, we have to impose an assumption 
\begin{align}\label{eq finite}
\lim_{t\to\infty}\E_x\left\{e^{-\lambda t}X_t^2\right\}=0.
\end{align} 
The auxiliary embedded quadratic optimal stopping problem reads now as
\begin{align}\label{eq aux kill}
V_1^c(x)=\sup_\tau\E_x\left\{e^{-\lambda\tau}(X_\tau-c)^2\right\}.
\end{align}

It can be shown that for a given $c\in\I$ and $x\in\I$ it has a solution $\tau_{(a^*,b^*)}$ for some $a^*\in[\alpha,x]$ and $b^*\in[x,\beta]$ (cf. \cite{LamZer13}). The difference in unkilled problem is that now the optimal solution is typically two-boundary stopping rule and one-boundary rule rarely gives the value. Proceeding as in Section \ref{sec proof} we could again find $M_x$ and $B_x$ such that the game with pure strategies is
\begin{align}\label{eq kill game}
\sup_{(a,b)\in[0,x]\times [x,B_x]}\inf_{c\in[0,M_x]}A_1(\tau_{(a,b)},c;x).
\end{align}
This has a solution by Proposition \ref{theo Karlin}, and hence we could (if going through all the details) make a statement similar to Theorem \ref{theo main}(A), the only difference being that now the optimal stopping time to the problem would be $\tau^*=\xi_{p^*}\tau_{(a_1,b_1)}+(1-\xi_{p^*})\tau_{(a_2,b_2)}$ with $(a_i,b_i)\in[\alpha,x]\times [x,\beta]$, $i=1,2$, so that no one-boundary solution is known to be essential.

In this way, the general result concerning (constantly) killed diffusion would be attainable by applying existing procedure from Section \ref{sec proof}. However, the details of the solution would be quite laborious to clarify: could the solution be one-boundary solution? When is the solution a randomized stopping time? How many essential strategies there are?, etc. Hence killed diffusion solution is out of the scope of the present study. 
	
\section{Examples}\label{sec examples}

\subsection{Solution algorithm}\label{subsec algo}

Before proceeding to our examples, let us introduce a solution algorithm how to find the solution for all $x\in\mathcal{I}$. The algorithm is written for Case (I): We assume that $\alpha=0$, $\beta>0$, where $\alpha$ is attractive while $\beta$ is not and $\frac{b^2}{S(b)}\to0$ as $b\to\beta$. If $S$ is differentiable and $\frac{S'(x)}{S(x)}x$ is non-decreasing, the solution is easy to find applying Theorem \ref{theo main}\eqref{theo main Sim}. So, we assume now that the above mentioned mapping is not non-decreasing. 

Under these assumptions the solution is potentially a randomized solution, and there is no explicit way to tell what is the optimal stopping time. However, we can construct an algorithm based on the fact that the solution exists and is either a stopping time $\tau_{(0,z)}$ for some $z$ or a randomization between stopping times $\tau_{(0,z)}$ and $\tau_{(a,b)}$, where $0\leq a\leq x\leq b\leq z$. In the algorithm we separate these two cases. 

\begin{enumerate}[Step 1.]
\item \begin{enumerate}[(i)]
	\item For each $c\in\mathcal{I}$, solve the embedded quadratic problem 
	\begin{align}\label{eq as}
	V^c(x)=\sup_z E_x\left\{(X_{\tau_{(0,z)}}-c)^2\right\}.
	\end{align}
	A threshold $z_c$ that maximizes the ratio $\frac{z^2-2cz}{S(z)}$ is the maximizer for this embedded quadratic problem (cf. Lemma \ref{lemma stop}). Let 
	\[C=\left\{c\in\I\mid \exists \text{ multiply $z_{c}$ maximizing \eqref{eq as}}\right\}\] be the set of all $c$, for which there exists more than one maximizer for \eqref{eq as}. Typically, the set $C$ is finite.
	\item Take $c\in C$. Let $\mathcal{Z}_c=\left\{z\mid z=\argmax\{\frac{z^2-2cz}{S(z)}\}\right\}$ be the set of maximizers of \eqref{eq as} for $c$, and denote by $\underline{z}_{c}:=\inf\left\{\mathcal{Z}_c\right\}$ and $\overline{z}_{c}:=\sup\left\{\mathcal{Z}_c\right\}$ the smallest and greatest of such points.	Check whether assumption \ref{assumption 2}\eqref{as2 Case I} holds or not, i.e. is the condition $\E_{\underline{z}_c}\left\{X_{\tau_{(0,\overline{z}_c)}}\right\}>c$ met.
\end{enumerate}
	\item[Step 2a.] Assumption \ref{assumption 2}\eqref{as2 Case I} holds.
	\begin{enumerate}[(i)]
	\item For the chosen $c\in C$, define a randomized stopping time $\hat{\tau}_c(p):=\xi_p\tau_{(0,\underline{z}_{c})}+(1-\xi_p)\tau_{(\alpha,\overline{z}_{c})}$ with $\xi_p$ being a Bernoulli random variable with a parameter $p$, and define $\underline{x}_c$ and $\overline{x}_c$ to be the smaller and greater, respectively, solutions to the equations
	\begin{align*}
	\E_{x}\left\{X_{\hat{\tau}_c(1)}\right\}&=c \quad \Longleftrightarrow \quad \underline{x}_c=S^{-1}\left(\frac{c}{\underline{z}_c}S(\underline{z}_c)\right)\\ 
		\E_{x}\left\{X_{\hat{\tau}_c(0)}\right\}&=c \quad \Longleftrightarrow \quad \overline{x}_c=S^{-1}\left(\frac{c}{\overline{z}_c}S(\overline{z}_c)\right).
		\end{align*}
	Then for all $x\in(\underline{x}_c,\overline{x}_c)$ there exists a unique $p^*(x)\in(0,1)$ satisfying the condition $\E_x\left\{X_{\hat{\tau}_c(p^*(x))}\right\}=c$.
\item Repeat the step (i) for all $c\in C$ which satisfies Assumption \ref{assumption 2}\eqref{as2 Case I}. 
\item Define $\mathcal{J}_A:=\bigcup_{c\in C}(\underline{x}_c,\overline{x}_c)$ to be the set of points $x$ for which Assumption \ref{assumption 2}\eqref{as2 Case I} is satisfied.
\end{enumerate}\label{algo 2}

\item[Step 2b.] Assumption \ref{assumption 2}\eqref{as2 Case I} does not hold.
\begin{enumerate}[(i)]
	\item For the chosen $c\in C$, $\overline{x}_c$ and $\underline{x}_c$ are again the solutions to 
	\begin{align*}
	\E_{x}\left\{X_{\tau_{(0,\underline{z}_c)}}\right\}&=c \quad \Longleftrightarrow \quad \underline{x}_c=S^{-1}\left(\frac{c}{\underline{z}_c}S(\underline{z}_c)\right).\\
	\E_{x}\left\{X_{\tau_{(0,\overline{z}_c)}}\right\}&=c \quad \Longleftrightarrow \quad \overline{x}_c=S^{-1}\left(\frac{c}{\overline{z}_c}S(\overline{z}_c)\right).
	\end{align*}
\item	For $x\in(\underline{x}_c,\underline{z}_c]$, we can use a randomized stopping time $\hat{\tau}_c(p)=\xi_p\tau_{(0,\underline{z}_{c})}+(1-\xi_p)\tau_{(\alpha,\overline{z}_{c})}$. However, for $x\in(\underline{z}_c,\overline{x}_c)$ we need to define a randomized stopping time $\check{\tau}_c(p):=\xi_p\tau_{D_c}+(1-\xi_p)\tau_{(\alpha,\overline{z}_{c})}$. Here $\tau_{D_c}$ is an optimal stopping time, where $D_c$ is the stopping set for an embedded problem with a parameter $c$.
	\item Repeat the steps (i) -- (ii) for all $c\in C$, which does not satisfy Assumption \ref{assumption 2}\eqref{as2 Case I}.
	\item Define $\mathcal{J}_{\not{A}}:=\bigcup_{c\in C}(\underline{x}_c,\overline{x}_c)$ to be the set of points $x$ for which Assumption \ref{assumption 2}\eqref{as2 Case I} is not satisfied.
\end{enumerate}
\setcounter{enumi}{2}
\item The following is an optimal stopping time:
	\begin{align*}
	\begin{cases}
	\tau_{(\alpha,z^*(x))},\quad  &x\in\I\setminus\left(\mathcal{J}_{A}\cup\mathcal{J}_{\not{A}}\right)\\
	\hat{\tau}_c(x)=\xi_{p^*_x}\tau_{(0,\underline{z}_c)}+(1-\xi_{p^*_x})\tau_{(0,\overline{z}_c)},\quad &x\in\mathcal{J}_A\\
	\hat{\tau}_c(x)=\xi_{p^*_x}\tau_{(0,\underline{z}_c)}+(1-\xi_{p^*_x})\tau_{(0,\overline{z}_c)},\quad &x\in\mathcal{J}_{\not{A}}\,\&\, x\in(\underline{x}_c,\underline{z}_c]\\
		\check{\tau}_c(x)=\xi_{p^*_x}\tau_{D_c}+(1-\xi_{p^*_x})\tau_{(0,\overline{z}_c)},\quad &x\in\mathcal{J}_{\not{A}}\,\&\, x\in(\underline{z}_c,\overline{x}_c).
	\end{cases}
	\end{align*}
	First, when $x\in\I\setminus\left(\mathcal{J}_A\cup\mathcal{J}_{\not{A}}\right)$, then $z^*(x)$ is a maximizer of $(z-\alpha)^2\frac{S(x)}{S(z)}\left(1-\frac{S(x)}{S(z)}\right)$ and hence, if $S$ is differentiable, a solution (not necessarily unique, as there may be local extreme points!) to the first order optimality condition  \[\frac{S(z)-S(x)}{\frac{1}{2}S(z)-S(x)}=\frac{S'(z)}{S(z)}z.\]

Second, when $x\in\mathcal{J}_A$, then $x\in(\underline{x}_c,\overline{x}_c)$ for some $c$. The points $\underline{z}_c$ and $\overline{z}_c$ are maximizers associated with this $c$ and $p^*_x$ can be solved from \[
\E_x\left\{X_{\hat{\tau}(p^*_x)}\right\}=c\quad \Longleftrightarrow \quad p^*_x=\frac{\displaystyle\frac{c}{S(x)}-\frac{\overline{z}_c}{S(\overline{z}_c)}}{\displaystyle\frac{\underline{z}_c}{S(\underline{z}_c)}-\frac{\overline{z}_c}{S(\overline{z}_c)}}.
\]

Lastly, when $x\in\mathcal{J}_{\not{A}}$, then $x\in(\underline{x}_c,\overline{x}_c)$ for some $c$. One then needs to find $D_c$ associated with this $c$ and $p^*_c$ can be solved from  
\[
\E_x\left\{X_{\xi_{p^*_x}\tau_{D_c}+(1-\xi_{p^*_x})\tau_{(0,\overline{z}_c)}}\right\}=c.
\]
	\label{algo 3}
\item The value reads as
\begin{align*}
V(x)=\inf_c V^c(x)=\begin{cases}
(z^*(x)-\alpha)^2\frac{S(x)}{S(z^*)}\left(1-\frac{S(x)}{S(z^*)}\right),\quad&x\in\I\setminus\left(\mathcal{J}_{A}\cup\mathcal{J}_{\not{A}}\right)\\
\frac{\overline{z}_c^2-2c\overline{z}_c}{S(\overline{z}_c)}S(x)+c^2,\quad & x\in(\underline{x}_c,\overline{x}_c)\subset \mathcal{J}_A\cup\mathcal{J}_{\not{A}}.
\end{cases}
\end{align*}
Here $z^*(x)$ is as in Step \ref{algo 3}, and $c\in C$, $\underline{z}_c$, and $\overline{z}_c$ are the constants associated with the interval $(\underline{x}_c,\overline{x}_c)$.
\label{algo 4}
\end{enumerate}

In the algorithm we first identify the regions in which the solution is a randomized stopping time solution, after which we know that in everywhere else, a familiar threshold stopping time is an optimal one. 

There are three observations to make. First, in Step \ref{algo 3}, when $x\in\I\setminus\left(\mathcal{J}_{A}\cup\mathcal{J}_{\not{A}}\right)$, the optimal stopping threshold $z^*(x)$ is a solution to the first order optimality condition, but now as $\frac{S'(z)}{S(z)}z$ is not non-decreasing, it is not necessarily a unique solution. Therefore, one needs to check which solution is the maximizer. Second observation is that the value for $x\in(\underline{x}_c,\overline{x}_c)$, given in Step \ref{algo 4}, can be written with a constant stopping boundary $\overline{z}_c$ (or equivalently with $\underline{z}_c$). The reason for this is that the value of the variance stopping problem equals to the value of the embedded quadratic problem, and for $x\in(\underline{x}_c,\overline{x}_c)$ the corresponding constant $c$ is unaltered. 

Lastly, if Assumption \ref{assumption 2}\eqref{as2 Case I} does not hold, we see that Step 2 differs quite remarkably, as we do not know how $D_c$ looks like in general case. However, for $x\in(\underline{x}_c,\underline{z}_c]$ we are in the safe waters and we can randomize between $\tau_{(0,\underline{z}_c)}$ and $\tau_{(0,\overline{z}_c)}$. This follows from the fact that as $x<\underline{z}_c$, we can apply Lemma \ref{lemma A} to conclude that $\tau_{(0,\underline{z}_c)}$ and $\tau_{(0,\overline{z}_c)}$ are two essential strategies satisfying the conditions \eqref{eq A}.

\subsection{Geometric Brownian motion}\label{subsec gbm}
Let us first illustrate our results with geometric Brownian motion (which is also one of the examples considered in \cite{Pedersen11}). 

Now the state space is $\I=(0,\infty)$ and diffusion is a solution to the stochastic differential equation
\[
dX_t=\mu X_tdt+\sigma X_tdW_t,\quad X_0=x,\]
where $\mu\in\R$ and $\sigma\in\R_+$ are given coefficients. The scale function is given by
\[S(x)=\begin{cases}
\frac{x^{1-\frac{2\mu}{\sigma^2}}}{1-\frac{2\mu}{\sigma^2}},\quad& \mu\neq \frac{1}{2}\sigma^2\\
\log(x),\quad & \mu=\frac{1}{2}\sigma^2.
\end{cases}
\]

We have two trivial cases:
\begin{enumerate}[1.]
	\item \emph{Assume that $\mu>\frac{1}{2}\sigma^2$.} Then $S(\infty)=0$ and so $\infty$ is attractive (by Proposition \ref{prop trans1}) and $V(x)=\infty$ by Corollary \ref{cor inf}.
	\item \emph{Assume that $\mu= \frac{1}{2}\sigma^2$.} Then $-S(0)=\infty=S(\infty)$, and gBm is recurrent leading to a value $V(x)=\infty$ (by Lemma \ref{lemma recurrent}).
	\end{enumerate}
	
	The third case is the most interesting one:
	\begin{enumerate}[1.]\setcounter{enumi}{2}
		\item \emph{Assume that $\mu<\frac{1}{2}\sigma^2$.} Then $S(0)=0$ and $S(\infty)=\infty$, so that $0$ is attractive while $\infty$ is not. Furthermore $\p_x\left(\tau_b<\infty\right)=\frac{S(x)}{S(b)}$ so that 
	\begin{align*}
	\lim_{b\to\infty}\p_x\left(\tau_b<\infty\right)b^2=\lim_{b\to\infty}\frac{b^2}{S(b)}S(x)=\begin{cases}
		\infty,\quad & \mu>-\frac{1}{2}\sigma^2\\
	x^2\in(0,\infty),\quad & \mu=-\frac{1}{2}\sigma^2\\
		0,\quad & \mu<-\frac{1}{2}\sigma^2.
	\end{cases}
	\end{align*}
	Hence we have yet another three cases with the first one being trivial:
	\begin{enumerate}[$i.)$]
		\item \emph{Assume further that $\mu>-\frac{1}{2}\sigma^2$.} Then $\lim_{b\to\infty}\p_x\left(\tau_b<\infty\right)b^2=\infty$ and consequently $V(x)=\infty$ by Proposition \ref{prop inf}.
		\item \emph{Assume further that $\mu<-\frac{1}{2}\sigma^2$.} Then $\lim_{b\to\infty}\p_x\left(\tau_b<\infty\right)b^2=0$ and consequently all the conditions of Case (I) in Assumption \ref{assumption} are satisfied. In addition, as $\frac{S'(z)}{S(z)}z\equiv 1-\frac{2\mu}{\sigma^2}$ is a constant and hence non-decreasing, we apply Theorem \ref{theo main}\eqref{theo main Sim}: For a fixed $x>0$, the optimal stopping time is $\tau_{(0,z^*(x))}$, where $z^*(x)$ is a unique solution to a first order optimality condition
		\begin{align}\label{eq gbm opt}
		\frac{S(z^*)-S(x)}{\frac{1}{2}S(z^*)-S(x)}=\frac{S'(z^*)}{S(z^*)}z^*\quad\Longleftrightarrow \quad z^*(x)=\left(\frac{2\mu}{\mu+\frac{1}{2}\sigma^2}\right)^{\frac{\sigma^2}{\sigma^2-2\mu}}x.
		\end{align}
		Moreover, the value reads as
		\begin{align*}
		V(x)&=z^*(x)^2\frac{S(x)}{S(z^*(x))}\left(1-\frac{S(x)}{S(z^*(x))}\right)\\
		&=x^2\left(\left(\frac{2\mu}{\mu+\frac{1}{2}\sigma^2}\right)^{\frac{\sigma^2+2\mu}{\sigma^2-2\mu}}-\left(\frac{2\mu}{\mu+\frac{1}{2}\sigma^2}\right)^{\frac{4\mu}{\sigma^2-2\mu}}\right).
		\end{align*}
		We notice that the optimal stopping time and the value are identical to what was obtained in Theorem 3.2 in \cite{Pedersen11}. Notice also that $z^*(x)$ from \eqref{eq gbm opt} never intersects the diagonal $x$ (as $2\mu \neq \mu +\frac{1}{2}\sigma^2$ always), so that there is no interesting dynamic optimal solution (introduced in Subsection \ref{subsec on optimality}) for the problem.
		\item \emph{Assume further that $\mu=-\frac{1}{2}\sigma^2$}, so that $S(x)=\frac{1}{2}x^2$ and $\lim_{b\to\infty}\p_x\left(\tau_b<\infty\right)b^2=x^2\in(0,\infty)$. Now we can apply Lemma \ref{lemma special}\eqref{lemma hard A} to conclude the result in this case. We see that the first order optimality condition 
		\[\frac{S(z)-S(x)}{\frac{1}{2}S(z)-S(x)}=\frac{S'(z)}{S(z)}z\quad\Longleftrightarrow \quad x^2=0\]
		does not have a solution for any $x\in(0,\infty)$. Consequently, by Lemma \ref{lemma special}\eqref{lemma hard A}, the optimal "stopping time" is $\tau_{(0,\infty)}$, and the value reads as
		\begin{align*}
		V(x)=\lim_{z\to\infty}z^2\frac{x^2}{z^2}\left(1-\frac{x^2}{z^2}\right)=x^2.
		\end{align*}
		Especially we see that this value is finite, but at the same time it is not attainable almost surely. However, by choosing $Z>x$ to be a large number, we get with a stopping time $\tau_{(0,Z)}$
		\begin{align*}
		\Var_x\left\{X_{\tau_{(0,Z)}}\right\}=x^2-\frac{x^4}{Z^4},
		\end{align*}
		which we can get as close to $V(x)$ as we like. Here $\tau_{(0,Z)}$ is a finite stopping time with probability $\p_x\left(\tau_Z<\infty\right)=\frac{x^2}{Z^2}$.
		
		We would like to mention that this case ($\mu=-\frac{1}{2}\sigma^2$) was not considered in \cite{Pedersen11}. 
	\end{enumerate}
\end{enumerate}

\subsection{Jacobi diffusion} Next we illustrate our results on a finite state space when both boundaries are attractive. To that end, let $\I=(0,1)$ and consider a Jacobi diffusion $X_t$ (see e.g. Chapter 2 in \cite{Kuznetsov04} for a basic characteristics), which is a solution to a SDE
\begin{align*}
dX_t=(a-bX_t)dt+\sigma\sqrt{X_t(1-X_t)}dW_t.
\end{align*} 
Here $W_t$ is a standard Brownian motion. Moreover, we assume that $a,b,\sigma\in\R_+$ are such that $0<\frac{a}{b}<1$, so that the mean-reverting level lies in the interval $(0,1)$. Furthermore, for illustrative purposes, we assume that $2b-2a<\sigma^2$ and $2a<\sigma^2$ so that we can write down the scale function explicitly as
\begin{align*}
S(x)=\text{Beta}(x,-B,-A),
\end{align*}
where $\text{Beta}$ is the incomplete beta function, $B:=\frac{2a}{\sigma^2}-1\in(-1,0)$, and $A:=\frac{2b}{\sigma^2}-\frac{2a}{\sigma^2}-1\in(-1,0)$. Now $S(0)=0$ and $S(1)<\infty$ so that both end points are attractive and we have Case (III) of Assumption \ref{assumption} to consider and the solution can be read from Theorem \ref{theo mainIII}. Especially, as the state space is finite, the value of a variance stopping problem \eqref{eq prob} is always finite.

Notice that in \cite{Pedersen11} the Jacobi diffusion was also examined but in a case where only the lower boundary $0$ was an attractive point instead of both end points. This affects greatly to the outcome as in our case, following Theorem \ref{theo mainIII}, the solution depends on which boundary is closer, and the closeness is measured by inspecting whether $\E_x\left\{X_{\tau_{(0,1)}}\right\}=\frac{S(x)}{S(1)}$ is greater or smaller than $\frac{1}{2}$. It can be proved that the monotonicities of $\frac{S'(z)z}{S(z)}$ and $\frac{S'(y)}{S(1)-S(y)}(1-y)$ are satisfied so that the solution is of the type:
\begin{align*}
\begin{aligned}
&\tau_{(0,z)},\quad & \text{if }S(x)\leq \tfrac{1}{2}S(1);\\
&\tau_{(y,1)},\quad & \text{if }S(x)> \tfrac{1}{2}S(1).
\end{aligned}
\end{align*}
Notice that in \cite{Pedersen11}, where only $0$ was an attractive point, the optimal stopping time was always of the type $\tau_{(0,z)}$. 

To illustrate numerically this example on Jacobi diffusion, let us choose $a=0.02$, $b=0.038$ and $\sigma=0.26$. Then the the mean-reverting level $\frac{a}{b}\approx 0.53\in(0,1)$. With these choices $A\approx-0.47$ and $B\approx-0.41$, and the state $S^{-1}(\frac{S(1)}{2})\approx 0.43$. Below this state, the optimal stopping time is $\tau_{(0,z^*(x))}$ and above it is $\tau_{(y^*(x),1)}$. The optimal stopping boundaries $z^*(x)$ and $y^*(x)$ are illustrated in Figure \ref{fig ex jacobi}.
\begin{figure}[!ht]
\begin{center}
\includegraphics[width=0.45\textwidth]{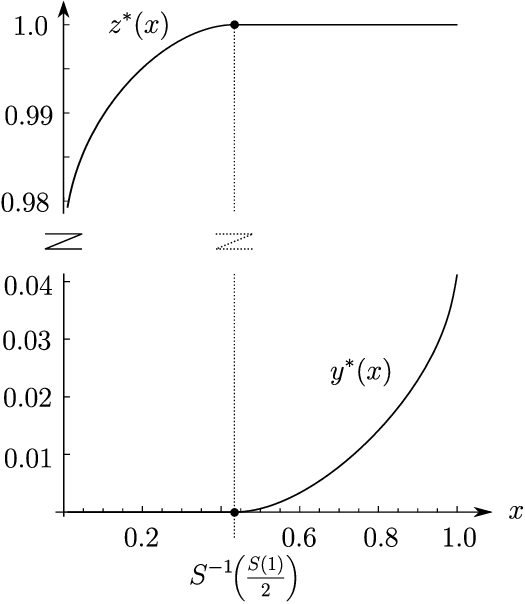}
\end{center}
\caption{\small \emph{Jacobi diffusion} -example  with  $a=0.02$, $b=0.038$ and $\sigma=0.26$. The optimal stopping time is $\tau^*=\tau_{(z^*,y^*)}$, if we interpret $z^*(x)=1$ for all $x\geq S^{-1}(\frac{S(1)}{2})$ and $y^*(x)=0$ for all $x\leq S^{-1}(\frac{S(1)}{2})$.}\label{fig ex jacobi}
\end{figure}

\subsection{Randomized solution}\label{subsec ex random}
The mapping $\frac{S'(x)}{S(x)}x$ is non-decreasing with most of the usual diffusions, and consequently the solution is a "pure strategy" stopping time with the most familiar diffusions. 

In order to illustrate the randomized stopping time -concept, we construct a specific diffusion: Let a state space be $\I=\R_+$ and define the scale function by
\begin{align*}
S(x):=\begin{cases}
\frac{x^2-\frac{3}{2}x}{4x-6},\quad&x<2\\
\frac{x^2-\frac{3}{2}x}{-10x+22},\quad&x\in[2,2.1)\\
\frac{x^2-\frac{3}{2}x}{\frac{1}{10}x+0.8},\quad&x\in[2.1,12)\\
\frac{x^2-\frac{3}{2}x}{2e^{12}e^{-x}},\quad&x\geq 12.
\end{cases}
\end{align*}
One can easily check that such an $S(x)$ is increasing, continuous, $S(0)=0$, and $S(\infty)=\infty$, so that $0$ is attractive and $\infty$ is not. Moreover, we can straightforwardly check that $\lim_{b\to\infty}\frac{b^2}{S(b)}=0$ and conclude that the conditions of Case (I) in Assumption \ref{assumption} are satisfied. Notice that $S$ is not continuously differentiable over the points $2,$ $2.1,$ and $12$, but the general proof does not require differentiability so that we can now apply Theorem \ref{theo main}\eqref{theo main Gen}. Observe that also the monotonicity condition of $\frac{S'(x)}{S(x)}x$ is not met, as it is strictly decreasing on $(2.1,12)$.

We now follow the algorithm from Subsection \ref{subsec algo}.
\begin{enumerate}[Step 1.]
	\item \begin{enumerate}[(i)]
		\item For $c>0$, we solve the auxiliary embedded quadratic problem 
\begin{align*}
V^c(x)=\sup_\tau\E_x\left\{(X_\tau-c)^2\right\}.
\end{align*}
 It can be shown that in this particular example, for a given $c\in\R_+\setminus\{\bar{c}\}$, $\bar{c}= \frac{3}{4}$, there exists a unique state $z_c$ maximizing a ratio $\frac{z^2-2cz}{S(z)}$ (cf. Lemma \ref{lemma stop}). Moreover, $z_c\equiv 2$ for all $c<\bar{c}$ and $z_c\equiv 12$ for all $c\in(\bar{c},5.54)$. Now $C=\{\bar{c}\}$. 
\item For a $\bar{c}$ there exist two states $2= \underline{z}<\overline{z}=12$ both maximizing the ratio $\frac{z^2-2\bar{c}z}{S(z)}$. Now $\E_2\left\{X_{\tau_{(0,12)}}\right\}\approx 0.095<0.75=\bar{c}$ so that Assumption \ref{assumption 2}\eqref{as2 Case I} does not hold. Thus we need to continue to Step 2b.
	\end{enumerate}
\item[Step 2b.] \begin{enumerate}[(i)]
		\item Now we can solve $\underline{x}$ and $\overline{x}$:
		\begin{align*}
		\E_{\underline{x}}\left\{X_{\tau_{(0,\underline{z})}}\right\}=\frac{S(\underline{x})}{S(2)}2={\bar{c}} &\quad \Longrightarrow\quad \underline{x}=S^{-1}(0.1875)=0.75\\
		\E_{\overline{x}}\left\{X_{\tau_{(0,\overline{z})}}\right\}=\frac{S(\overline{x})}{S(12)}12={\bar{c}} &\quad \Longrightarrow\quad \overline{x}=S^{-1}(3.9375)\approx 2.958.
\end{align*}
\item For $x\in(\underline{x},\underline{z}]=(0.75,2]$, we can use a randomized stopping time $\hat{\tau}(p)=\xi(p)\tau_{(0,\underline{z})}+(1-\xi(p))\tau_{(0,\overline{z})}$ to produce an optimal stopping time. However, for $x\in(\underline{z},\overline{x})\approx(0.75,2.958)$, we need a randomized stopping time $\check{\tau}(p)=\xi(p)\tau_{D_{\bar{c}}}+(1-\xi(p))\tau_{(0,\overline{z})}$ so that we need to solve $D_{\bar{c}}$ for the embedded problem with a parameter $\bar{c}$. 

Applying standard optimal stopping arguments (e.g. from \cite{Salminen85} or \cite{PesShi06}) we can conclude that for $x\in(\underline{z},\overline{x})$, $\tau_{D_{\bar{c}}}=\tau_{(a^*,b^*)}=\tau_{(2,12)}$ is an optimal stopping time for $V^{\bar{c}}(x)$.
\item[(iv)] Now $\mathcal{J}_{\not{A}}=(\underline{x},\overline{x})\approx(0.75,2.958)$.
\end{enumerate} 
	
\item An optimal stopping time is
\begin{align*}
\begin{cases}
\tau_{(0,z_*(x))},\quad & x\in(0,\underline{x}]\quad  \left((0,0.75]\right)\\
\xi_{p^*_x}\tau_{(0,\underline{z})}+(1-\xi_{p^*_x})\tau_{(0,\overline{z})},\quad & x\in (\underline{x},\underline{z})\quad\left(=(0.75,2]\right)\\
\xi_{p^*_x}\tau_{(2,12)}+(1-\xi_{p^*_x})\tau_{(0,\overline{z})},\quad & x\in (\underline{z},\overline{x})\quad\left(\approx(2,2.958)\right)\\
\tau_{(0,z^*(x))},\quad & x\in[\overline{x},\infty)\quad \left(\approx[2.958,\infty)\right).
\end{cases}
\end{align*}
Here $z_*(x)$ is the smallest and $z^*(x)$ the greatest solution to the first order optimality condition
\[\frac{S(z)-S(x)}{\frac{1}{2}S(z)-S(x)}=\frac{S'(z)}{S(z)}z.\]
Moreover, for $x\in(0.75,2]$, $p^*_x$ is the unique solution to
\begin{align*}
\E_x\left\{X_{\hat{\tau}(p)}\right\}=\bar{c} \Longleftrightarrow p^*_x\approx \frac{0.7875}{x}-0.05.
\end{align*} 
For $x\in(2,2.958)$, $p^*_x$ is the unique solution to 
\begin{align*}
\E_x\left\{X_{\check{\tau}(p)}\right\}=\bar{c} \Longleftrightarrow p^*_x\approx 6.25 -\frac{369.14}{63-S(x)}.
\end{align*}

\label{ex step 3}
\item The value reads as
\begin{align*}
V(x)=\begin{cases}
z_*(x)^2\frac{S(x)}{S(z_*(x))}\left(1-\frac{S(x)}{S(z_*(x))}\right),\quad &x\in(0,\underline{x}]\\
0.5625+2S(x),\quad&x\in(\underline{x},\overline{x})\\
z^*(x)^2\frac{S(x)}{S(z^*(x))}\left(1-\frac{S(x)}{S(z^*(x))}\right),\quad &x\in[\overline{x},\infty),
\end{cases}
\end{align*}
where $z_*(x)$ and $z^*(x)$ are as in Step \ref{ex step 3} above. 
\end{enumerate}

In this example we saw how an optimal stopping time can be a mixture between two different types of stopping times, namely $\tau_{z}$ and $\tau_{(a,b)}$. There are also examples where we randomize between $\tau_z$ and $0$ (see e.g. \cite{GadPed15}). In this way we see how the variance stopping problem can offer surprising solutions despite its simple formulation.

\subsection*{Acknowledgements}
The authors gratefully acknowledge the many helpful suggestions by the two anonymous referees. They clearly improved the quality of the paper. Also, the discussions with many colleagues are greatly appreciated.

\bibliographystyle{amsplain}
\bibliography{VarianceBib}

\end{document}